\newcommand{\C}{\mathbb{C}}
\newcommand{\QQ}{\mathbb{Q}}
\newcommand{\NN}{\mathbb{N}}
\newcommand{\PP}{\mathbb{P}}
\newcommand{\MM}{\mathcal M}
\newcommand{\LL}{\mathbb L}
\newcommand{\gr}{\hbox{Gr}}
\newcommand{\wt}{\widetilde}
\newcommand{\ima}{\hbox{Im}}
\newcommand{\rom}{\romannumeral}
\newcommand*{\da@rightarrow}{\mathchar"0\hexnumber@\symAMSa 4B }
\newcommand*{\da@leftarrow}{\mathchar"0\hexnumber@\symAMSa 4C }
\newcommand*{\xdashrightarrow}[2][]{%
  \mathrel{%
    \mathpalette{\da@xarrow{#1}{#2}{}\da@rightarrow{\,}{}}{}%
  }%
}
\newcommand{\xdashleftarrow}[2][]{%
  \mathrel{%
    \mathpalette{\da@xarrow{#1}{#2}\da@leftarrow{}{}{\,}}{}%
  }%
}
\newcommand*{\da@xarrow}[7]{%
  \sbox0{$\ifx#7\scriptstyle\scriptscriptstyle\else\scriptstyle\fi#5#1#6\m@th$}%
  \sbox2{$\ifx#7\scriptstyle\scriptscriptstyle\else\scriptstyle\fi#5#2#6\m@th$}%
  \sbox4{$#7\dabar@\m@th$}%
  \dimen@=\wd0 %
  \ifdim\wd2 >\dimen@
    \dimen@=\wd2 %
  \fi
  \count@=2 %
  \def\da@bars{\dabar@\dabar@}%
  \@whiledim\count@\wd4<\dimen@\do{%
    \advance\count@\@ne
    \expandafter\def\expandafter\da@bars\expandafter{%
      \da@bars
      \dabar@ 
    }%
  }%
  \mathrel{#3}%
  \mathrel{%
    \mathop{\da@bars}\limits
    \ifx\\#1\\%
    \else
      _{\copy0}%
    \fi
    \ifx\\#2\\%
    \else
      ^{\copy2}%
    \fi
  }%
  \mathrel{#4}%
}
\newcommand\undermat[2]{
  \makebox[0pt][l]{$\smash{\underbrace{\phantom{
    \begin{matrix}#2\end{matrix}}}_{\text{$#1$}}}$}#2}
\newtheorem{convention}{Conventions}
\newtheorem{nonumberingp}{Proposition}
\newtheorem{nonumberingt}{Acknowledgements}
 \journalname{}
\begin{document}

\title{Algebraic cycles on Fano varieties of some cubics}

\author{Robert Laterveer}

\institute{CNRS - IRMA, Universit\'e de Strasbourg \at
              7 rue Ren\'e Descartes \\
              67084 Strasbourg cedex\\
              France\\
              \email{laterv@math.unistra.fr}   }

\date{Received: date / Accepted: date}

\maketitle

\begin{abstract} We study cycle--theoretic properties of the Fano variety of lines on a smooth cubic fivefold. The arguments are based on the fact that this Fano variety has finite--dimensional motive. We also present some results concerning Chow groups of Fano varieties of lines on certain cubics in other dimensions.
\end{abstract}

\keywords{Algebraic cycles \and Chow groups \and motives \and finite--dimensional motives \and cubics 
\and Bloch conjecture \and smash--nilpotence conjecture \and Murre's conjectures}

\subclass{14C15, 14C25, 14C30.}

\section{Introduction}

The notion of finite--dimensional motive, developed independently by Kimura and O'Sullivan \cite{Kim}, \cite{An}, \cite{MNP}, \cite{J4}, \cite{Iv} has given major new impetus to the field of algebraic cycles. To give but one example: thanks to this notion, we now know the Bloch conjecture is true for surfaces of geometric genus zero that are rationally dominated by a product of curves \cite{Kim}. 

Examples of varieties known to have finite--dimensional motive are rather scarce (cf. for instance \cite[remark 2]{moi}). 
In this note, we consider some recent members of the club: Fano varieties $F$ of lines on a smooth cubic fivefold $X$. These varieties $F$ are smooth projective of dimension $6$;
the fact that $F$ has finite--dimensional motive was established in \cite{moi}. We deduce some consequences concerning the Chow groups $A^j(F)$ of $F$. Here is a sample of the main results of this note:

\begin{nonumberingp}[=corollary \ref{Chow}] Let $F=F(X)$ be the Fano variety of lines on a smooth cubic fivefold $X\subset\PP^6(\C)$. Let $A^j_{AJ}(F)$ denote the kernel of the Abel--Jacobi map. Then
  \[ A^j_{AJ}(F)_{\QQ}=0\ \ \ \hbox{for\ all\ }j\not=4\ .\]
  \end{nonumberingp}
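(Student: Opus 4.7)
The plan is to exploit the finite-dimensionality of $M(F)$, established in \cite{moi}, together with the Hodge-theoretic structure of the cohomology of $F$. The overall strategy is classical for this type of problem: finite-dimensionality (and the ensuing Kimura nilpotence) converts cohomological vanishing statements into statements about the action of correspondences on Chow groups, and in particular into vanishing statements for the deeper pieces of a Bloch--Beilinson-type filtration.

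The first step is to analyze the rational cohomology of $F$. The variety $F$ sits inside the Grassmannian $G(2,7)$ as the zero locus of a section of $\mathrm{Sym}^3\mathcal{Q}^{\vee}$, and the incidence correspondence given by the universal line $P\subset F\times X$ relates $H^{\ast}(F,\QQ)$ to $H^{\ast}(X,\QQ)$. Since $H^i(X,\QQ)$ is of Hodge--Tate type for $i\neq 5$, while $H^5(X,\QQ)$ is of Hodge level $\leq 1$ (with only Hodge numbers $h^{3,2}=h^{2,3}$ nonzero), I expect to conclude that the nonalgebraic part of $H^{\ast}(F,\QQ)$ is concentrated in degrees $6$ and $7$, and more precisely that $H^{2j-1}(F,\QQ)$ is either zero or of Hodge coniveau $\geq j-1$ for every $j\neq 4$. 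The Lefschetz hyperplane theorem for the embedding $F\hookrightarrow G(2,7)$ and the projection/restriction formulas for $P$ should take care of this bookkeeping.

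Second, I would use finite-dimensionality to construct a Chow--K\"unneth decomposition $\{\pi_i\}$ of $F$ satisfying the relevant part of Murre's conjectures, and then identify $A^j_{AJ}(F)_{\QQ}$ with the contribution of the projectors $\pi_i$ for $i\leq 2j-2$. Concretely, the cohomological information from the first step should force the homological class of each of these projectors to lie in the algebraic, ``Grassmannian'' part of $H^{\ast}(F\times F,\QQ)$ for $j\neq 4$, so that Kimura's smash-nilpotence theorem, combined with the standard action of projectors on Chow groups, gives $A^j_{AJ}(F)_{\QQ}=0$.

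The hard part, I expect, is the second step: going from raw finite-dimensionality to a genuine Bloch--Beilinson vanishing is not automatic, and one has to carefully decompose the motive so that the ``transcendental piece in degree $2j-1$'' is isolated and seen to be trivial for $j\neq 4$. I would anticipate either invoking Vial's refinement of Murre's conjectures for motives of small coniveau, or arguing directly via a Bloch--Srinivas-type decomposition of the diagonal of $F$ built from the incidence correspondence $P$, in the spirit of the cubic-threefold case.
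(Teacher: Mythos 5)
Your overall architecture --- read off the (co)niveau structure of $H^{\ast}(F)$ from that of $X$, then use finite--dimensionality to lift the resulting splitting to Chow groups --- is essentially the paper's strategy. But the concrete mechanism in your second step would fail as stated. You assert that for $j\neq 4$ the relevant Chow--K\"unneth projectors $\pi_i$, $i\le 2j-2$, have homological class in the algebraic (``Grassmannian'') part of $H^{\ast}(F\times F,\QQ)$. This is false: already $\pi_3$ projects onto $H^3(F)$, which is a Tate twist of $H^5(X)$ and hence $42$--dimensional of Hodge type $\{(2,1),(1,2)\}$, so it is not spanned by products of algebraic classes; and for $j=5,6$ the transcendental part of $H^6(F)$ (a twist of $\wedge^2 H^5(X)$) is among the projectors you must control. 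The correct statement --- and the content of theorem \ref{main5} --- is that the odd projectors factor through a \emph{curve} $C$ and $\Pi_6^{tr}$ factors through the \emph{surface} $C\times C$, where $C\subset F$ is the curve produced by Lewis' cylinder homomorphism (which realizes $H^5(X)$ as a direct summand of $H^1(C)$). The vanishing then follows not from algebraicity of the projectors but from the fact that $A^{\ast}_{AJ}$ of a curve or a point is zero and that $A^{j-2}_{AJ}(C\times C)=0$ for $j\neq 4$.

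Two further points you would have to supply. First, Hodge coniveau is not enough for a motivic argument: you need geometric coniveau realized by an explicit correspondence; here that is Lewis' $\Psi\in A^3(X\times C)$, and the companion fact $A^{\ast}_{AJ}(X)=0$ (Lewis, Otwinowska) both gives finite--dimensionality of $h(X)$ (via Vial's criterion) and lets one promote the cohomological left--inverse to a left--inverse in $\MM_{\rm rat}$ (lemma \ref{ok}). Second, the incidence correspondence $P\subset F\times X$ alone does not span $H^{\ast}(F)$: the transcendental part of $H^6(F)$ is not in the image of $P_{\ast}$ applied to $H^{\ast}(X)$, and one needs the Hilbert--square relation $h(F)(2)\oplus\bigoplus_i h(X)(i)\cong h(X^{[2]})$ of theorem \ref{main} to reduce $h(F)$ to $h(X\times X)$, and thence to $h(C\times C)$. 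Once these ingredients are in place, your plan collapses to the paper's proof: a split injection $A^j_{AJ}(F)\to A^{j-2}_{AJ}(C\times C)$ whose target vanishes for $j\neq 4$.
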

  
\begin{nonumberingp}[=proposition \ref{propbloch}]  Let $F=F(X)$ be the Fano variety of lines on a smooth cubic fivefold $X\subset\PP^6(\C)$. Let $\Gamma\in A^6(F\times F)$ be a correspondence
such that
  \[ \Gamma_\ast=\hbox{id}\colon\ \ H^{4,2}(F)\ \to\ H^{4,2}(F)\ .\]
  Then
  \[ \Gamma_\ast\colon\ \ A^4_{AJ}(F)_{\QQ}\ \to\ A^4_{AJ}(F)_{\QQ} \]
  is an isomorphism.
 \end{nonumberingp}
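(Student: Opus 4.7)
The plan is to apply the finite--dimensionality of the motive of $F$, proved in \cite{moi}, together with the Kimura--O'Sullivan principle that nilpotents on cohomology lift to nilpotents on Chow groups. Since any sum $\mathrm{id}+N$ with $N$ nilpotent is invertible, it suffices to show that $\Gamma_{\ast}-\mathrm{id}$ acts nilpotently on $A^4_{AJ}(F)_{\QQ}$.

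The first step is to isolate the relevant piece of the motive. Using a Chow--K\"unneth decomposition of $F$ (which exists thanks to finite--dimensionality) together with the preceding corollary \ref{Chow}, I would cut out a ``transcendental'' sub--motive $t(F)\subset\mathfrak{h}^6(F)$ whose cohomological realization is the smallest rational sub--Hodge structure $T\subset H^6(F,\QQ)$ whose complexification contains $H^{4,2}(F)$, and whose Chow realization is identified with the Bloch--Beilinson graded piece $\mathrm{Gr}^2_F A^4(F)_{\QQ}=A^4_{AJ}(F)_{\QQ}$.

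The second step is to promote the hypothesis $\Gamma_{\ast}=\mathrm{id}$ on $H^{4,2}(F)$ to $\Gamma_{\ast}=\mathrm{id}$ on the entire cohomological realization of $t(F)$. Since $\Gamma$ is a $\QQ$--class, complex conjugation gives $\Gamma_{\ast}=\mathrm{id}$ on $H^{2,4}(F)$ as well; provided $h^{6,0}(F)=h^{5,1}(F)=0$---a Hodge--theoretic check to be carried out on $F$---the rational sub--Hodge structure $T$ is generated by $H^{4,2}\oplus H^{2,4}$, and a standard sub--Hodge structure argument propagates the identity to all of $T$. Kimura's theorem then applies to the self--correspondence $\Gamma-\mathrm{id}_{t(F)}$ of the finite--dimensional motive $t(F)$, giving that $\Gamma_{\ast}-\mathrm{id}$ is nilpotent on $A^4_{AJ}(F)_{\QQ}$, as desired.

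The main obstacle is the first step: cutting $t(F)$ out as an actual Chow summand of $\mathfrak{h}(F)$, and ensuring that its Chow realization is exactly $A^4_{AJ}(F)_{\QQ}$ rather than some subquotient. Carrying this through requires the fine structure of the Chow--K\"unneth decomposition, the vanishing in corollary \ref{Chow}, and presumably a detailed analysis via the universal line correspondence $\mathcal{L}\subset F\times X$ relating $F$ to the cubic fivefold.
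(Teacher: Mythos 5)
Your outline is essentially the paper's proof. The object you call $t(F)$ is exactly the motive $t_6(F)=(F,\Pi_6^{tr},0)$ of proposition \ref{CK}, with $(\Pi_6^{tr})_\ast H^\ast(F)=H^6_{tr}(F)$ and $(\Pi_6^{tr})_\ast A^\ast(F)=A^4_{AJ}(F)$; the ``main obstacle'' you flag is therefore already resolved in the paper, not by a direct analysis of the universal line correspondence, but by feeding theorem \ref{main5} into Vial's refined Chow--K\"unneth machinery \cite{V4}: theorem \ref{main5} shows the cohomology of $F$ is spanned by that of a surface (Vial's condition (*)), and finite--dimensionality of $h(F)$ gives condition (**). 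The same theorem supplies your Hodge--theoretic check $h^{6,0}(F)=h^{5,1}(F)=0$, and also the extra fact needed to propagate $\Gamma_\ast=\hbox{id}$ from $H^{4,2}(F)$ to all of $H^6_{tr}(F)$, namely that $(3,3)$ Hodge classes in $H^6_{tr}(F)$ vanish (they come from $(1,1)$ classes on a surface, hence lie in $N^3H^6(F)$ by Lefschetz $(1,1)$). One technical remark: since $\Gamma$ and $\Pi_6^{tr}$ need not commute, applying the nilpotence theorem to the homologically trivial correspondence $\Gamma\circ\Pi_6^{tr}-\Pi_6^{tr}$ does not literally give ``$\Gamma=\hbox{id}+\hbox{nilpotent}$'' as correspondences; but after restricting to $A^4_{AJ}(F)$, where $\Pi_6^{tr}$ acts as the identity and which is preserved by $\Gamma_\ast$, every cross term $\Gamma^{\circ r_1}\circ\Pi_6^{tr}\circ\cdots\circ\Pi_6^{tr}$ acts as a power of $\Gamma_\ast$, so one obtains a monic polynomial identity in $\Gamma_\ast$ equal to the identity on $A^4_{AJ}(F)$ --- this is precisely the explicit inverse the paper writes down, and it is the same invertibility mechanism (``identity plus nilpotent'') that you invoke.
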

 
Other consequences concern Voevodsky's smash--nilpotence conjecture (proposition \ref{propvoe}), and Murre's conjectures (proposition \ref{propmurre}).

We also present some results concerning Fano varieties of lines on cubics of other dimensions. Most of these results are restricted to special cubics $X$ (typically, in order to ensure that $X$  has finite--dimensional motive). Here is a sample of these results:

  \begin{nonumberingp}[=proposition \ref{propvoe6}] Let $X\subset\PP^7(\C)$ be a cubic sixfold defined by an equation
   \[   f_1(x_0,\ldots,x_3)+ f_2(x_4,\ldots,x_7)=0\ ,\]
   where $f_1, f_2$ define smooth cubic surfaces.
  Let $F$ be the Fano variety of lines on $X$ (so $F$ is smooth of dimension $8$). Then numerical and smash--equivalence coincide on $A^j(F)$ for all $j\not=4$.
  \end{nonumberingp}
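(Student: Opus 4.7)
\medskip\noindent\emph{Proof plan.} The strategy is, first, to show that the cubic sixfold $X$ itself has finite--dimensional motive, and second, to propagate this information to $F$ away from the middle codimension.

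For the first step, the crucial hypothesis is that the equation of $X$ has separated variables. Writing $Y_i\subset\PP^3$ for the smooth cubic surface defined by $f_i$, the inductive construction of Shioda and Katsura expresses $h(X)$ as a direct summand of a motive built out of $h(Y_1)\otimes h(Y_2)$ and the motive of the Fermat cubic curve $E:u^3+v^3+w^3=0$, suitably Tate--twisted. Smooth cubic surfaces are rational, so their motives are direct sums of Tate twists; elliptic curves have finite--dimensional motive by Kimura. Finite--dimensionality passes to tensor products and direct summands, hence $h(X)$ is finite--dimensional.

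For the second step, I would work with the universal line $P\subset F\times X$, which is a $\PP^1$--bundle over $F$ (via one projection $p$) and a three--dimensional fibration over $X$ (via the other projection $q$). The goal is to construct idempotents in $A^8(F\times F)_\QQ$ giving a refined Chow--K\"unneth decomposition
\[ h(F)\ =\ M_{\text{alg}}\ \oplus\ M_{\text{mid}}\ , \]
where $M_{\text{mid}}$ carries the middle cohomology $H^8(F)$ and $M_{\text{alg}}$ carries everything else. The summand $M_{\text{alg}}$ should factor through a motive built from $h(X)$, Tate twists and Lefschetz pieces (constructed from $[P]$, the Pl\"ucker polarisation on $F$, and hard Lefschetz); by Step 1 it will then be finite--dimensional. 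The theorem of Kahn--Sebastian--Kimura (numerical equals smash--equivalence on the Chow groups of any finite--dimensional motive) applied to $M_{\text{alg}}$ immediately yields the statement on $A^j(F)_\QQ$ for all $j\not=4$.

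The main obstacle is the construction of the splitting in the second step. It is the motivic incarnation of the cohomological fact that, away from the middle degree, $H^\ast(F,\QQ)$ is generated under cup product with polarisations by classes pulled back from $X$ via the incidence $[P]$. The techniques developed in \cite{moi} to establish the full finite--dimensionality of the Fano variety of lines on a cubic fivefold should be a natural starting point for producing the required motivic projectors; the reason the same method is not expected to give the finite--dimensionality of the whole motive $h(F)$ is precisely that, in the sixfold case, no such control is available over the middle summand $M_{\text{mid}}$.
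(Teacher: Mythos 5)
Your Step 1 is fine and matches the paper: the Katsura--Shioda trick for equations with separated variables does give finite--dimensionality of $h(X)$, and by corollary \ref{corfin} this already gives finite--dimensionality of \emph{all} of $h(F)$, not merely of a summand. The fatal problem is the last step of your plan: there is no ``theorem of Kahn--Sebastian--Kimura'' asserting that numerical and smash--equivalence coincide on the Chow groups of an arbitrary finite--dimensional motive. That assertion is exactly Voevodsky's conjecture for finite--dimensional motives and is wide open; the known implication goes the other way (Andr\'e: Voevodsky's conjecture implies, and is strictly stronger than, Kimura's finite--dimensionality conjecture). What is actually available is much weaker: Kimura's result that an \emph{oddly} finite--dimensional motive has all its cycles smash--nilpotent, Sebastian's theorems for $0$-- and $1$--cycles on abelian varieties and for $1$--cycles on varieties dominated by products of curves, and the classical fact for algebraically trivial cycles. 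If the theorem you invoke were true, the restriction $j\not=4$ in the statement would be superfluous, since the whole motive $h(F)$ is finite--dimensional here.

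For the same reason the architecture of your Step 2 points in the wrong direction: the obstruction at $j=4$ is not a lack of control over a middle summand $M_{\text{mid}}$ (finite--dimensionality is simply not the relevant issue). The paper's proof instead embeds $h(F)$ into $h^4(A\times A)(2)\oplus\bigoplus_j h^2(A)(m_j)\oplus\bigoplus_j \LL(n_j)$ for an explicit $10$--dimensional abelian variety $A$ (the product of the intermediate Jacobians of the two cubic threefolds $f_i+z^3=0$ produced by the Katsura--Shioda construction), identifies the image of $A^j_{num}(F)$ with the Beauville pieces $A^{j-2}_{(2j-8)}(A\times A)$ and $A^{2}_{(2)}(A)$, and then uses K\"unnemann's hard Lefschetz to translate these into $0$-- and $1$--cycles on abelian varieties, where Sebastian's results apply. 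The case $j=4$ lands in $A^2_{num}(A\times A)\cap A^2_{(0)}(A\times A)$, whose vanishing is an open case of Beauville's conjectures --- that, and not any failure of finite--dimensionality, is why $j=4$ is excluded. To repair your argument you must replace the appeal to the nonexistent theorem by such a reduction to cycles of dimension at most one on abelian varieties (or on products of curves).
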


 \begin{nonumberingp}[=proposition \ref{fermat}] Let $X\subset\PP^{n+1}(\C)$ be the Fermat cubic
     \[ x_0^3+x_1^3+\cdots + x_{n+1}^3=0\ ,\]
   and let $F=F(X)$ be the Fano variety of lines on $X$ (so $F$ is smooth of dimension $2n-4$).
   The cycle class map
   \[ A^j(F)_{\QQ}\ \to\ H^{2j}(F,\QQ) \]
   is injective for all $j\ge {(5n-2)/ 3}$.
 \end{nonumberingp}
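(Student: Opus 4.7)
The plan is to combine the finite-dimensionality of the motive of $F=F(X)$ with a Hodge-theoretic analysis of its cohomology.

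First, I would establish that $F$ has finite-dimensional motive in the sense of Kimura--O'Sullivan. The Fermat cubic $X\subset\PP^{n+1}(\C)$ is rationally dominated by a product of Fermat curves via Shioda's inductive structure for Fermat hypersurfaces, so $h(X)$ is finite-dimensional (and indeed lies in the tensor subcategory generated by curves). The incidence correspondence $P\subset F\times X$ then exhibits $h(F)$ inside the tensor category generated by $h(X)$ and projective bundles, along the lines of the argument used in this paper for the cubic fivefold case, so that $h(F)$ is again finite-dimensional.

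Second, I would invoke the standard consequence of finite-dimensionality that controls the kernel of the cycle class map: if $Y$ is a smooth projective variety with finite-dimensional motive, then $A^j_{hom}(Y)_\QQ=0$ whenever the transcendental Hodge pieces of $H^{*,*}(Y,\QQ)$ vanish in a sufficient range around the weight $2j$. This is a version of the generalized Bloch conjecture, which becomes unconditional in the presence of a finite-dimensional motive.

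The third and final step is the Hodge computation, which I expect to be the main obstacle. Via the incidence correspondence, $H^{p,q}(F)$ can be expressed in terms of the Hodge pieces of $X$ and of certain auxiliary bundles. For the Fermat cubic, Shioda's character calculus gives a completely explicit description of $H^{p,q}(X)$, indexed by tuples $(a_0,\ldots,a_{n+1})\in(\ZZ/3)^{n+2}$ with $\sum a_i\equiv 0$ and all $a_i\neq 0$. Transporting this description to $F$ and bounding which Fermat characters can contribute to the transcendental part of $H^{2j}(F)$, one arrives at the threshold $j\ge (5n-2)/3$, beyond which no transcendental contribution survives. The combinatorics of the extremal Fermat characters is delicate, but once the bound is established, the finite-dimensionality of $h(F)$ does the rest via the principle in the second step.
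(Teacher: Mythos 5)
Your step 2 is where the argument breaks down. The principle you invoke --- ``if $Y$ has finite--dimensional motive, then $A^j_{hom}(Y)_{\QQ}=0$ whenever the transcendental Hodge pieces vanish in a sufficient range'' --- is not a theorem. Finite--dimensionality does \emph{not} make the generalized Bloch conjecture unconditional: to pass from a vanishing of Hodge numbers to a vanishing of Chow groups one needs, at a minimum, (a) the generalized Hodge conjecture for $Y$ in the relevant degrees (to upgrade Hodge coniveau to geometric coniveau), and (b) a mechanism such as Vial's refined Chow--K\"unneth machinery (which itself requires Lefschetz--standard--conjecture--type inputs) before the nilpotence theorem can be brought to bear. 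The cases where finite--dimensionality alone suffices (e.g.\ surfaces with $p_g=0$, or the weak Bloch conjecture in proposition \ref{propbloch} of this paper) are precisely those where the relevant transcendental cohomology is literally zero or where an explicit motivic decomposition is already in hand. What you would actually need for $F$ is a statement about the \emph{Chow groups} of the Fermat cubic $X$ in a range, and that is exactly the hard input your outline defers to a ``delicate'' character computation without supplying the bridge from cohomology back to cycles.

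The paper's proof avoids all of this and never uses finite--dimensionality of $h(F)$. It takes the split injection coming from theorem \ref{main},
\[ A^j_{hom}(F)\ \to\ A^{j+2}_{hom}(X\times X)\oplus \bigoplus_{i=-1}^{n-3} A^{j-i}_{hom}(X)\ ,\]
and then quotes Voisin's theorem that for the Fermat cubic $A^j_{hom}(X)=0$ for $j\ge (2n+4)/3$ (theorem \ref{chowfermat}); a Bloch--Srinivas argument extends this to $A^j_{hom}(X\times X)=0$ for $j\ge (5n+4)/3$ (corollary \ref{prod}), and the threshold $j\ge (5n-2)/3$ then falls out of the index bookkeeping. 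So the Fermat--specific input is a known unconditional vanishing of Chow groups of $X$ itself, not a Hodge--theoretic computation on $F$. If you want to salvage your route, you would have to replace your step 2 by a proof of the Chow--group vanishing for $X$ (essentially re--proving Voisin's theorem via the Shioda/Katsura--Shioda inductive structure) and then propagate it to $F$ through the motivic relation of theorem \ref{main} --- at which point you have reconstructed the paper's argument.
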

  
 \begin{nonumberingp}[=proposition \ref{propDM}] Let $X\subset\PP^{n+1}(\C)$ be a smooth cubic, and let $F=F(X)$ be the Fano variety of lines on $X$ (so $F$ is smooth of dimension $2n-4$). The cycle class map
   \[ A^j(F)_{\QQ}\ \to\ H^{2j}(F,\QQ) \]
   is injective for $j\ge 2n-6$ (resp. $j\ge 2n-7$) provided $n\ge 14$ (resp. $n\ge 18$).
  \end{nonumberingp}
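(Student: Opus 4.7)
My plan is to transfer the injectivity question from the Fano variety $F$ to the cubic hypersurface $X$ itself via the universal line correspondence, and then apply an injectivity theorem for Chow groups of cubic hypersurfaces due to Otwinowska (or the closely related results of Esnault--Levine--Viehweg).

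First I set up the incidence. Let $P=\{(\ell,x)\in F\times X : x\in\ell\}$ be the universal line, with projections $q\colon P\to F$ (a $\PP^1$--bundle) and $p\colon P\to X$ (surjective, with generic fiber of dimension $n-3$). The class $[P]\in A^{n-1}(F\times X)$ defines a correspondence $\alpha\mapsto p_\ast(q^\ast\alpha)$ sending $A^j(F)_\QQ$ to $A^{j-n+3}(X)_\QQ$. If $\alpha\in A^j(F)_\QQ$ has trivial cohomology class, then the transferred class $\beta:=p_\ast(q^\ast\alpha)\in A^{j-n+3}(X)_\QQ$ is also cohomologically trivial.

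Second, I would invoke Otwinowska's injectivity theorem: for $X\subset\PP^{n+1}$ a smooth cubic of dimension $n$, the cycle class map $A^k(X)_\QQ\to H^{2k}(X,\QQ)$ is injective once $k$ exceeds a threshold determined by the coniveau of $H^n(X,\QQ)$, which for cubics is of order $\tfrac{2n}{3}$. The numerical hypotheses $n\ge 14$ (resp.\ $n\ge 18$) in the statement are precisely calibrated so that the shifted codimension $j-n+3\ge n-3$ (resp.\ $j-n+4\ge n-4$) lies within Otwinowska's injectivity range. This forces $\beta=0$ in $A^\ast(X)_\QQ$.

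The main obstacle --- and the heart of the argument --- is then to deduce $\alpha=0$ from $\beta=0$. Here I would exploit (i) the projective bundle formula for $q\colon P\to F$, (ii) the embedding $F\hookrightarrow G(2,n+2)$ together with Schubert calculus, and (iii) the Debarre--Manivel description of $H^\ast(F,\QQ)$ as a sum of a ``Schubert'' part pulled back from the ambient Grassmannian and a ``primitive'' part arising from the primitive cohomology of $X$ via $[P]$. The assumption $[\alpha]=0$ kills the Schubert component of $\alpha$ in cohomology, and this Schubert component can be shown to inject into cohomology (since the Schubert cycles are linearly independent there), reducing matters to the primitive part; on the primitive part, $[P]_\ast$ is essentially injective modulo a polynomial in the Plücker class $h_F$, as can be seen by a Schubert-calculus computation expressing an identity of the shape
\[
  Q(h_F)\cdot \alpha \ =\ q_\ast(p^\ast\widetilde\beta)
\]
with $\widetilde\beta$ built from $\beta$ and $Q$ a polynomial whose multiplication action is injective on $A^j(F)_\QQ$ (by hard Lefschetz and a numerical check). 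Combined with $\beta=0$ this yields $\alpha=0$. The explicit thresholds $n\ge 14,18$ emerge simultaneously from meeting the Otwinowska range and from non-vanishing of the relevant Schubert intersection numbers on $G(2,n+2)$ that ensure this invertibility.
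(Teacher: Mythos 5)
Your reduction to $X$ via the single incidence correspondence $P$ cannot be completed as described, and the step you yourself flag as ``the heart of the argument'' is where it breaks. The identity $Q(h_F)\cdot\alpha=q_\ast(p^\ast\widetilde\beta)$ that you would need is not a Schubert--calculus fact: the composition $q_\ast p^\ast\circ p_\ast q^\ast$ on Chow groups is governed by the class of the incidence variety $\{(\ell,\ell^\prime):\ell\cap\ell^\prime\not=\emptyset\}\subset F\times F$, and no identity expressing it as a polynomial in $h_F$ modulo classes coming from $X$ is available at the level of Chow groups in arbitrary dimension (cohomological versions exist, and Chow--level versions are known only in very special cases such as $n=4$). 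Even granting such an identity, the injectivity of multiplication by $Q(h_F)$ on $A^j(F)_{\QQ}$ is a hard Lefschetz statement \emph{for Chow groups}, which is a wide open conjecture; ``hard Lefschetz and a numerical check'' only gives the cohomological statement. Finally, your item (iii) is inaccurate: $H^\ast(F,\QQ)$ is not exhausted by Schubert classes plus the cylinder image of $H^n_{prim}(X)$ --- it also contains a summand built from $\mathrm{Sym}^2 H^n_{prim}(X)$ (already visible for cubic fourfolds, where $H^4(F)\supset \mathrm{Sym}^2 H^2(F)_{prim}$), which is invisible to the single correspondence $P$. This is a structural reason why no argument using only $P$ can control all of $A^\ast(F)$.

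The paper's proof sidesteps all of this by using the motivic relation $h(F)(2)\oplus\bigoplus_i h(X)(i)\cong h(X^{[2]})$ (theorem \ref{main}), which immediately yields a \emph{split} injection
\[ A^j_{hom}(F)\ \hookrightarrow\ A^{j+2}_{hom}(X\times X)\oplus\bigoplus_{i=-1}^{n-3}A^{j-i}_{hom}(X)\ ,\]
so that nothing needs to be inverted. One then quotes Otwinowska's vanishing $A_\ell^{hom}(X)=0$ for $\ell\le 4$ (resp.\ $\ell\le 5$) when $n\ge 14$ (resp.\ $n\ge 18$), propagates it to $X\times X$ by a Bloch--Srinivas argument, and checks that the right--hand side vanishes for $j\ge 2n-6$ (resp.\ $j\ge 2n-7$); the binding constraint comes from the $X\times X$ term, which your single--correspondence setup does not see at all. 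If you want to salvage your approach, you should replace the correspondence $P$ by the motivic comparison with $X^{[2]}$ (equivalently, with $X\times X$), which is exactly what supplies the missing ``$\mathrm{Sym}^2$'' part and the splitting.
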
 
  
 Proposition \ref{propDM} provides a partial confirmation of a conjecture of Debarre and Manivel \cite[Conjecture 6.4]{DMan} (cf. conjecture \ref{DMconj}).

\vskip0.6cm

\begin{convention} In this note, the word {\sl variety\/} will refer to a reduced irreducible scheme of finite type over $\C$. 

All Chow groups will be with rational coefficients: For any variety $X$, we will denote by $A_j(X)$ the Chow group of $j$--dimensional cycles on $X$ with $\QQ$--coefficients.
For $X$ smooth of dimension $n$ the notations $A_j(X)$ and $A^{n-j}(X)$ will be used interchangeably. 

The notations 
$A^j_{hom}(X)$ and $A^j_{AJ}(X)$ will be used to indicate the subgroups of 
homologically, resp. Abel--Jacobi trivial cycles.
For a morphism $f\colon X\to Y$, we will write $\Gamma_f\in A_\ast(X\times Y)$ for the graph of $f$.
The contravariant category of Chow motives (i.e., pure motives with respect to rational equivalence as in \cite{Sch}, \cite{MNP}) will be denoted $\MM_{\rm rat}$. The category of pure motives with respect to homological (resp. numerical) equivalence will be denoted $\MM_{\rm hom}$ (resp. $\MM_{\rm num}$).

We will write $H^j(X)$ to indicate singular cohomology $H^j(X,\QQ)$.
\end{convention}

\section{Finite--dimensional motives}

We refer to \cite{Kim}, \cite{An}, \cite{MNP}, \cite{Iv}, \cite{J4} for basics on the notion of finite--dimensional motive. 
An essential property of varieties with finite--dimensional motive is embodied by the nilpotence theorem:

\begin{theorem}[Kimura \cite{Kim}]\label{nilp} Let $X$ be a smooth projective variety of dimension $n$ with finite--dimensional motive. Let $\Gamma\in A^n(X\times X)_{}$ be a correspondence which is numerically trivial. Then there is $N\in\NN$ such that
     \[ \Gamma^{\circ N}=0\ \ \ \ \in A^n(X\times X)_{}\ .\]
\end{theorem}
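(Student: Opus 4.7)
The plan is to follow Kimura's original strategy \cite{Kim}: view $\Gamma$ as an endomorphism $f=[\Gamma]$ of the Chow motive $\mathfrak{h}(X)\in\MM_{\rm rat}$, and deduce nilpotency from finite--dimensionality together with vanishing of all ``traces'' in $\MM_{\rm num}$.

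First, by hypothesis $\mathfrak{h}(X)$ is finite--dimensional, so it decomposes as $\mathfrak{h}(X)=M_+\oplus M_-$ where $M_+$ is evenly finite--dimensional (some exterior power $\wedge^{d+1}M_+=0$ in $\MM_{\rm rat}$) and $M_-$ is oddly finite--dimensional (some symmetric power $\mathrm{Sym}^{e+1}M_-=0$). The endomorphism $f$ decomposes accordingly as $f_+\oplus f_-+$ off--diagonal parts; a standard block argument reduces the problem to showing that each of $f_+\colon M_+\to M_+$ and $f_-\colon M_-\to M_-$ is nilpotent (the off--diagonal contribution does not cause trouble since a suitable power lands in the diagonal).

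Next I would invoke the central lemma: in a $\QQ$--linear pseudo--abelian rigid tensor category, if $M$ is evenly (resp.\ oddly) finite--dimensional and $g\in\mathrm{End}(M)$, then $g$ satisfies a Cayley--Hamilton type polynomial identity whose coefficients are the elementary symmetric functions of $g$ (resp.\ the complete homogeneous symmetric functions), themselves expressible via Newton's identities in terms of the power sums $p_k(g)=\mathrm{tr}(g^{\circ k})$. Concretely, $\wedge^{d+1}M_+=0$ forces $e_{d+1}(f_+)=0$, and by descending induction combined with Newton's identities one gets a relation of the form
\[
f_+^{\circ N}= \text{polynomial in }f_+ \text{ with coefficients in } \QQ[p_1(f_+),p_2(f_+),\ldots].
\]
Since $\Gamma$ is numerically trivial, so are all its powers $\Gamma^{\circ k}$, and therefore all traces $p_k(f_+)$ vanish in $\MM_{\rm num}$; but traces of correspondences are $\QQ$--numbers and numerical vanishing of a $\QQ$--number means actual vanishing, so $p_k(f_+)=0$ in $\QQ$ for all $k\geq 1$. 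Feeding this into the Newton/Cayley--Hamilton relation yields $f_+^{\circ N}=0$ in $\MM_{\rm rat}$. The argument for $f_-$ is symmetric, using $\mathrm{Sym}^{e+1}M_-=0$ in place of $\wedge^{d+1}M_+=0$.

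The main obstacle is the combinatorial verification of the Cayley--Hamilton identity in the tensor--categorical setting: one must check carefully that the symmetric and antisymmetric Young projectors on $M^{\otimes n}$ interact with an endomorphism $f$ in such a way that vanishing of a Young projector translates into a genuine polynomial identity satisfied by $f$, with coefficients given by trace polynomials. This is precisely Kimura's key computation; once it is in place, the rest of the argument --- reduction to the two homogeneous summands, vanishing of power sums via numerical triviality, and conclusion by Newton's identities --- is essentially formal.
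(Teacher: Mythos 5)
The paper does not actually prove this theorem: it is quoted from Kimura's article \cite{Kim} with no in--text argument, so there is nothing internal to compare against. Your sketch is a faithful reconstruction of Kimura's own proof (his Propositions 7.2 and 7.5): split $h(X)=M_+\oplus M_-$ into evenly and oddly finite--dimensional summands, use the vanishing of $\wedge^{d+1}M_+$ (resp.\ $\mathrm{Sym}^{e+1}M_-$) to extract a Cayley--Hamilton identity whose coefficients are trace polynomials, note that $\mathrm{tr}(f^{\circ k})=\mathrm{deg}(\Gamma^{\circ k}\cdot\Delta_X)$ is a rational number which vanishes because $\Gamma^{\circ k}$ is numerically trivial (numerically trivial correspondences form a two--sided ideal under composition), and conclude nilpotence on each homogeneous summand. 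All of that is correct in outline.

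The one step you dispatch too quickly is the reduction from $M_+\oplus M_-$ to the summands. It is not true that ``a suitable power of $f$ lands in the diagonal'': every power of $f$ still has off--diagonal blocks, and even its diagonal blocks are sums of words $p_{i_0}\circ f\circ p_{i_1}\circ f\circ\cdots\circ f\circ p_{i_m}$ mixing the two projectors, so nilpotence of the two diagonal blocks of $f$ alone does not kill $f^{\circ m}$. What the argument actually needs is the stronger statement that the numerically trivial endomorphisms of $M_\pm$ form a \emph{nilpotent ideal}, not merely a nil one: the Cayley--Hamilton step shows this ideal is nil of bounded index, and the Nagata--Higman theorem (valid over $\QQ$) upgrades nil of bounded index to nilpotence of the ideal. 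With that in hand, any sufficiently long word from $M_+$ to $M_+$ (or from $M_-$ to $M_-$) contains enough closed excursions, each a numerically trivial endomorphism of the relevant summand, and therefore vanishes. This is precisely how Kimura closes the argument; as written, your block reduction has a genuine hole at this point, though it is a repairable one along standard lines.
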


 Actually, the nilpotence property (for all powers of $X$) could serve as an alternative definition of finite--dimensional motive, as shown by a result of Jannsen \cite[Corollary 3.9]{J4}.
   Conjecturally, all smooth projective varieties have finite--dimensional motive \cite{Kim}. 
For examples of varieties known to have finite--dimensional motive, cf. \cite[remark 2]{moi} and the references given there.   

In this note, we will use finite--dimensionality in the guise of the following two lemmas:

 \begin{lemma}[Vial \cite{V3}]\label{split} Let 
  \[ f\colon\ \ P\ \to\ R\ \ \ \hbox{in}\ \MM_{\rm rat}\]
  be a homomorphism of motives, where $P$ is finite--dimensional. Then there exist splittings
  \[  P=P_1\oplus P_2\ ,\ \ \ R=R_1\oplus R_2\ \ \ \hbox{in}\ \MM_{\rm rat}\ \]
  such that $f$ induces an isomorphism $P_1\cong R_1$, and the homomorphism $P_2\to R$ induced by $f$ is numerically trivial.
  \end{lemma}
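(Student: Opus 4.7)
The plan is to first split $\bar f$ modulo numerical equivalence, where semisimplicity makes the splitting automatic, and then to transfer the splitting back to $\MM_{\rm rat}$ using the nilpotence property enjoyed by the finite-dimensional motive $P$. By Jannsen's semisimplicity theorem, $\MM_{\rm num}$ is a semisimple $\QQ$-linear abelian category, so the reduction $\bar f\colon \bar P\to \bar R$ admits direct sum decompositions $\bar P=\bar P_1\oplus\bar P_2$, $\bar R=\bar R_1\oplus\bar R_2$ and a two-sided ``partial inverse'' $\bar g\colon \bar R\to \bar P$ with $\bar g\bar f$ equal to the projector of $\bar P$ onto $\bar P_1$, $\bar f\bar g$ equal to the projector of $\bar R$ onto $\bar R_1$, and $\bar f$ restricting to an isomorphism $\bar P_1\cong\bar R_1$ (in particular $\bar f\bar g\bar f=\bar f$). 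I would then fix any lift $g\colon R\to P$ of $\bar g$ in $\MM_{\rm rat}$.

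To upgrade this datum, I would consider $h:=gf\in\mathrm{End}(P)$. Since $\bar h=\bar g\bar f$ is an idempotent, $h^2-h$ is numerically trivial, and because $P$ is finite-dimensional, theorem~\ref{nilp} tells us $h^2-h$ is in fact nilpotent in the ring $\mathrm{End}(P)$. The standard procedure for lifting idempotents modulo a nilpotent ideal (via a polynomial in $h$) then produces a genuine idempotent $p_1\in\mathrm{End}(P)$ with $p_1-h$ nilpotent. Writing $P=P_1\oplus P_2$ with $\iota\colon P_1\hookrightarrow P$, $\pi\colon P\twoheadrightarrow P_1$ and $\iota\pi=p_1$, the composition $\pi gf\iota\in\mathrm{End}(P_1)$ is numerically equal to $\mathrm{id}_{P_1}$; hence it differs from $\mathrm{id}_{P_1}$ by a numerically trivial, and thus by theorem~\ref{nilp} applied to $P_1$ (which inherits finite-dimensionality as a summand of $P$) nilpotent, endomorphism. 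Therefore $\pi gf\iota$ is invertible; let $u$ denote its inverse.

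Setting $q_1:=f\iota u\pi g\in\mathrm{End}(R)$, one checks $q_1^2=q_1$ using $(\pi gf\iota)u=\mathrm{id}_{P_1}$, and this produces the desired splitting $R=R_1\oplus R_2$. A direct verification shows that $f\iota\colon P_1\to R_1$ and $u\pi g\colon R_1\to P_1$ are mutually inverse. Finally, the restriction of $f$ to $P_2$ is numerically trivial, since modulo numerical equivalence $f(1-p_1)\equiv f-fgf=0$ by the identity $\bar f\bar g\bar f=\bar f$. The principal obstacle is the asymmetry in the hypothesis: $R$ is not assumed finite-dimensional, so one cannot directly lift idempotents in $\mathrm{End}(R)$. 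The trick around this is to construct the projector $q_1$ entirely from data on the finite-dimensional side $P$ and then transport the resulting summand to $R$ via the invertibility of $\pi gf\iota$.
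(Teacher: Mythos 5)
Your argument is correct and follows essentially the same route as the paper: pass to $\MM_{\rm num}$ and use Jannsen's semisimplicity, lift the resulting idempotent on the finite--dimensional side $P$ via nilpotence of numerically trivial endomorphisms, and then define the summand $R_1$ by the projector $s\circ t$ where $t$ is a left--inverse to $s=f|_{P_1}$. The only cosmetic difference is that where the paper invokes lemma \ref{ok} as a black box to produce $t$, you construct it explicitly as $u\pi g$ via the unipotence of $\pi g f\iota$ --- which is precisely the content of that lemma.
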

  
  \begin{proof} This follows from \cite[Lemma 3.6]{V3}.
  For completeness' sake (and because the statement of loc. cit. is slightly different), we include a proof.
  The homomorphism $f$ induces a homomorphism of numerical motives
    \[ \bar{f}\colon\ \ \bar{P}\ \to\ \bar{R}\ \ \ \hbox{in}\ \MM_{\rm num}\ .\]
    Since $\MM_{\rm num}$ is semi--simple \cite{J0}, there exist splittings
    \[  \bar{P}=\bar{P}_1\oplus\bar{P}_2\ ,\ \ \ \bar{R}=\ima \bar{f}\oplus\bar{R}^\prime\ \ \ \hbox{in}\ \MM_{\rm num}\ \]
    such that $\bar{f}$ induces an isomorphism $\bar{P}_1\cong \ima \bar{f}$, and $\bar{P}_2\to \bar{R}$ and $\bar{P}\to \bar{R}^\prime$ are zero.
    Using finite--dimensionality, $\bar{P}_1$ lifts to a direct summand $P_1$ of $P\in\MM_{\rm rat}$. This induces a splitting $P=P_1\oplus P_2$, where the class of $P_2$ in $\MM_{\rm num}$ coincides with the afore--mentioned $\bar{P}_2$ (and so the homomorphism $P_2\to R$ induced by $f$ is numerically trivial).
     
    Let $s\colon P_1\to R$ denote the restriction of $f$ to $P_1$. Let $\bar{t}\colon \bar{R}\to\bar{P}_1$ be a left--inverse to $\bar{s}\colon \bar{P}_1\to \bar{R}$.
    By finite--dimensionality of $P_1$, there exists $t\colon R\to P_1$ which is left--inverse to $s\colon P_1\to R$ (lemma \ref{ok}). The idempotent $s\circ t\colon R\to R$
    defines a direct summand $R_1\subset R$ such that $P_1\cong R_1$. This induces a splitting $R=R_1\oplus R_2$, 
    where $R_2$ is defined by the idempotent $\hbox{id}_R-s\circ t$.
      \end{proof}  
  
\begin{lemma}[Vial \cite{V3}]\label{ok} Let  \[ f\colon\ \ P\ \to\ R\ \ \ \hbox{in}\ \MM_{\rm rat}\]
  be a homomorphism of motives, where $P$ is finite--dimensional. Assume 
  \[ \bar{f}\colon\ \ \bar{P}\ \to\ \bar{R}\ \ \ \hbox{in}\ \MM_{\rm num}\ \]
  has a left--inverse. Then also $f$ has a left--inverse.
  \end{lemma}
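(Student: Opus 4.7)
The plan is to produce the left-inverse by first lifting a left-inverse modulo numerical equivalence and then correcting the error by a geometric-series argument, using that numerically trivial endomorphisms of a finite-dimensional motive are nilpotent.

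First, I would pick a left-inverse $\bar g\colon \bar R\to\bar P$ of $\bar f$ in $\MM_{\rm num}$, and lift it to any $g_0\colon R\to P$ in $\MM_{\rm rat}$ (a representative of $\bar g$ under the projection $\MM_{\rm rat}\to\MM_{\rm num}$). Then set
\[ \eta := g_0\circ f - \mathrm{id}_P\ \in\ \mathrm{End}_{\MM_{\rm rat}}(P)\ .\]
By construction $\bar\eta=\bar g\circ\bar f-\mathrm{id}_{\bar P}=0$ in $\mathrm{End}_{\MM_{\rm num}}(\bar P)$, so $\eta$ is a numerically trivial endomorphism of $P$.

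Next, I would invoke finite--dimensionality of $P$ to conclude that $\eta$ is nilpotent. This is the key property of finite--dimensional motives (an incarnation of Kimura's nilpotence theorem \ref{nilp}, equivalently Jannsen's characterisation \cite[Corollary 3.9]{J4}): every numerically trivial endomorphism of a finite--dimensional motive is nilpotent. Thus there exists $N\in\NN$ with $\eta^{\circ N}=0$.

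Having $\eta$ nilpotent, I would form the polynomial
\[ u := \sum_{k=0}^{N-1}(-\eta)^{\circ k}\ \in\ \mathrm{End}_{\MM_{\rm rat}}(P)\ .\]
Since $u$ commutes with $\eta$, the usual telescoping gives
\[ u\circ(\mathrm{id}_P+\eta) = \mathrm{id}_P - (-\eta)^{\circ N} = \mathrm{id}_P\ ,\]
that is, $u\circ g_0\circ f=\mathrm{id}_P$. Hence $g:=u\circ g_0\colon R\to P$ is the desired left--inverse of $f$ in $\MM_{\rm rat}$.

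The only non--routine ingredient is the nilpotence of numerically trivial endomorphisms of a finite--dimensional motive, which is precisely where the hypothesis on $P$ is used; once this is granted, the geometric series trick (valid because $\mathrm{End}(P)$ is an associative $\QQ$--algebra and $\eta$ commutes with itself) finishes the proof without further input. I expect no serious obstacle beyond correctly citing this nilpotence property.
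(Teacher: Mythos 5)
Your proof is correct, and it is the standard argument behind this statement: the paper itself gives no proof (it merely cites a remark in Vial's paper), and the lift--and--correct--by--a--geometric--series argument you give is precisely what that remark amounts to. The only point worth flagging is that you need the nilpotence theorem in the form ``numerically trivial endomorphisms of an arbitrary finite--dimensional motive $P$ are nilpotent,'' which is slightly more general than Theorem \ref{nilp} as stated in the paper (which concerns $h(X)$ for a variety $X$); this general form is indeed available (Kimura, and Jannsen \cite[Corollary 3.9]{J4}, which you correctly cite), so there is no gap.
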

  
  \begin{proof} This is a remark in \cite[Section 3.3]{V3}.
   \end{proof}

\section{A relation between Chow motives}

We recall the main result of \cite{moi}:

\begin{theorem}[\cite{moi}]\label{main} Let $X\subset\PP^{n+1}(\C)$ be a smooth cubic hypersurface. Let $F:=F(X)$ denote the Fano variety of lines on $X$, and let $X^{[2]}$ denote the second Hilbert scheme of $X$. There is an isomorphism of Chow motives
  \[ h(F)(2)\oplus\bigoplus_{i=0}^n h(X)(j)\cong h(X^{[2]})\ \ \ \hbox{in}\ \MM_{\rm rat}\ .\]
\end{theorem}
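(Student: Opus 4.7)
The plan is to lift the Galkin--Shinder identity
\[ [X^{[2]}] \;=\; [X]\cdot [\PP^n] + \LL^2\,[F(X)] \qquad\text{in }K_0(\mathrm{Var}_\C) \]
to an isomorphism of Chow motives. The geometric engine is the rational map $\phi\colon X^{[2]}\dashrightarrow X$ sending a length-$2$ subscheme $\xi\subset X$, whose linear span $L_\xi$ is not contained in $X$, to the residual intersection point of $L_\xi\cap X$; this is well defined because $L_\xi\cap X$ has length $3$ by B\'ezout.

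First, I would analyse the indeterminacy locus $Y\subset X^{[2]}$ of $\phi$. A subscheme $\xi\in Y$ is precisely one whose span lies entirely on $X$, and the assignment $\xi\mapsto L_\xi$ identifies $Y$ with the relative Hilbert scheme of $2$ points on the universal line over $F$. Since $(\PP^1)^{[2]}\cong\PP^2$, this exhibits $Y$ as a smooth $\PP^2$-bundle $\pi\colon Y\to F$ of dimension $2n-2$, hence of codimension $2$ in the $2n$-dimensional smooth variety $X^{[2]}$.

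Next, I would resolve $\phi$ by passing to its graph closure $Z\subset X^{[2]}\times X$. The projection $p_1\colon Z\to X^{[2]}$ is the blow-up of $X^{[2]}$ along $Y$, whereas the other projection $p_2\colon Z\to X$ is generically a $\PP^n$-fibration, the fibre over $r\in X$ being parametrised by lines in $\PP^{n+1}$ through $r$ not contained in $X$. A further blow-up of $Z$ along the incidence locus $\{(L,r):r\in L\in F\}$, which is itself a $\PP^1$-bundle over $F$, turns $p_2$ into a genuine $\PP^n$-bundle. The resulting variety $\wt Z$ then admits simultaneously a description as a repeated blow-up of $X^{[2]}$ and a description as a repeated blow-up of a $\PP^n$-bundle over $X$, with all centres projective bundles over $F$.

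Feeding this diagram into the standard motivic identities---the blow-up formula relating $h(\wt Z)$ to $h(X^{[2]})$ with a contribution from $h(Y)(1)$, the projective bundle formula $h(Y)\cong h(F)\oplus h(F)(1)\oplus h(F)(2)$, and the projective bundle formula $\bigoplus_{j=0}^n h(X)(j)$ for the resolved $p_2$---one reads off an identity of Chow motives in which, after cancelling the common $h(F)$-twisted summands arising from both sides, exactly $h(F)(2)$ survives on the $h(X^{[2]})$-side. The main obstacle, and the most delicate point, is the bookkeeping of these multiple blow-up contributions and the verification that the cancellation of $h(F)$-twisted summands lifts to $\MM_{\mathrm{rat}}$, rather than holding only modulo numerical equivalence; this requires exhibiting the relevant mutually inverse correspondences explicitly, and checking that all blow-up centres are smooth of the expected codimensions without any genericity hypothesis on the cubic $X$.
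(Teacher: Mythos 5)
This theorem is not proved in the present paper at all --- it is imported wholesale from \cite{moi} --- but your outline is precisely the strategy of that reference: lift the Galkin--Shinder relation by identifying the blow-up of $X^{[2]}$ along the $\PP^2$-bundle $Y\to F$ with the blow-up of the $\PP^n$-bundle of lines through points of $X$ along the universal line (a $\PP^1$-bundle over $F$), and then apply the motivic blow-up and projective-bundle formulas. Two remarks: no \emph{further} blow-up of $Z$ is needed (the single blow-up along $Y$ is already isomorphic to the blow-up of the $\PP^n$-bundle, so that step of your description is stated backwards, though harmlessly), and the point you flag as delicate is indeed the real crux --- since $\MM_{\rm rat}$ is not known to be Krull--Schmidt, the cancellation of the common $h(F)(i)$ summands must be justified by checking that the corresponding block of the explicit correspondence is itself an isomorphism.
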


\begin{corollary}\label{corfin} Let $X$ and $F$ be as in theorem \ref{main}. If $X$ has finite--dimensional motive, then also $F$ has finite--dimensional motive.
\end{corollary}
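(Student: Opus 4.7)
\medskip

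\noindent\textbf{Proof plan.}
The strategy is to read off finite-dimensionality of $F$ directly from the motivic identity of theorem \ref{main}, by first establishing finite-dimensionality of the right-hand side. The starting observation is that the class of finite-dimensional motives is stable under the formation of sums, summands, tensor products and Tate twists, and that these closure properties transfer through any isomorphism in $\MM_{\rm rat}$. So it is enough to show that $h(X^{[2]})$ is finite-dimensional whenever $h(X)$ is: once this is known, theorem \ref{main} exhibits $h(F)(2)$ as a direct summand of $h(X^{[2]})$, hence $h(F)(2)$, and therefore $h(F)$, is finite-dimensional.

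To treat $X^{[2]}$, I would realize it via the blow-up construction. Let $Y:=\mathrm{Bl}_\Delta(X\times X)$ be the blow-up along the diagonal; then $Y$ carries a natural $S_2$-action and $X^{[2]}=Y/S_2$. Since $h(X)$ is finite-dimensional, so is $h(X\times X)=h(X)\otimes h(X)$, and the blow-up formula
\[ h(Y)\ \cong\ h(X\times X)\oplus\bigoplus_{i=1}^{n-1} h(X)(i)\ \ \ \hbox{in}\ \MM_{\rm rat} \]
expresses $h(Y)$ as a finite sum of Tate twists of finite-dimensional motives, hence $h(Y)$ is finite-dimensional. Now $h(X^{[2]})$ is cut out of $h(Y)$ by the symmetrizing idempotent $\tfrac{1}{2}(\mathrm{id}+\sigma)$ associated to the involution $\sigma$ on $Y$, so it is a direct summand of $h(Y)$; being a summand of a finite-dimensional motive, it is itself finite-dimensional.

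Combining the two steps, $h(X^{[2]})$ is finite-dimensional, and by theorem \ref{main} the same holds for its summand $h(F)(2)$, hence for $h(F)$. The one step one might worry about is the identification $X^{[2]}=Y/S_2$ and the resulting expression of $h(X^{[2]})$ as the symmetric summand of $h(Y)$ in $\MM_{\rm rat}$; this is classical (it underlies the de Cataldo--Migliorini computations for Hilbert schemes), but it is the only place where a nontrivial geometric input beyond the stability properties of finite-dimensional motives is used.
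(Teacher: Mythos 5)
Your proposal is correct and is essentially the argument the paper intends: the corollary is stated without proof precisely because it follows from theorem \ref{main} together with the standard stability of finite--dimensionality under sums, summands, tensor products and Tate twists, once one knows $h(X^{[2]})$ is a direct summand of $h(\mathrm{Bl}_\Delta(X\times X))$ --- the very decomposition the paper itself invokes later in the proof of theorem \ref{main5}. Your write-up simply makes these implicit steps explicit, and does so correctly.
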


\begin{corollary}\label{lef} Let $X$ and $F$ be as in theorem \ref{main}. Then $F$ satisfies the standard conjecture of Lefschetz type.
\end{corollary}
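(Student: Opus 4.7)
The approach I would take is to transfer the Lefschetz standard conjecture from $X^{[2]}$ to $F$ using the direct summand structure supplied by theorem \ref{main}. That isomorphism exhibits $h(F)(2)$ as a direct summand of $h(X^{[2]})$ in $\MM_{\rm rat}$, and hence also of the corresponding object in $\MM_{\rm hom}$. Since the Lefschetz standard conjecture admits a motivic reformulation that is stable under direct summands and Tate twists, the problem reduces to establishing the Lefschetz standard conjecture for the second Hilbert scheme $X^{[2]}$.

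To that end, I would first check B for the smooth cubic hypersurface $X\subset \PP^{n+1}(\C)$ itself. By the Lefschetz hyperplane theorem and its Poincar\'e dual, every cohomology group $H^i(X)$ with $i\ne n$ is spanned by restrictions of powers of the hyperplane class from the ambient projective space. Consequently, for each $i<n$ the hard Lefschetz isomorphism $L^{n-i}\colon H^i(X)\to H^{2n-i}(X)$ and its inverse are realized by algebraic correspondences, while on the middle piece $i=n$ the relevant operator is the identity. Hence $\Lambda_X$ is algebraic. To propagate B from $X$ to $X^{[2]}$, I would invoke the description $X^{[2]} = \bigl(\mathrm{Bl}_\Delta(X\times X)\bigr)/(\ZZ/2)$ and apply the stability of the Lefschetz standard conjecture under products, under blow-ups along smooth centers, and under quotients by finite groups.

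I expect the main obstacle to be this last chain of stability statements. Preservation under products is a straightforward K\"unneth argument, and preservation under smooth blow-ups follows from the blow-up formula for Chow motives, so these pose little difficulty. The descent from $\mathrm{Bl}_\Delta(X\times X)$ to the symmetric quotient $X^{[2]}$ is the most delicate step: one must identify the $(\ZZ/2)$-invariant part of the cohomology and exhibit the inverse Lefschetz operator on $X^{[2]}$ as induced by an invariant algebraic correspondence on the blow-up. Once this is in place, combining it with the direct summand property of theorem \ref{main} yields the Lefschetz standard conjecture for $F$.
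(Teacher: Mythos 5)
Your proposal is correct, and at bottom it is the same reduction the paper makes: everything comes down to the facts that $X$ itself satisfies the Lefschetz standard conjecture (its non-middle cohomology is algebraic, so $\Lambda_X$ is algebraic, as you say) and that this property propagates through the motivic constructions linking $X$ to $F$. The difference is one of packaging. The paper disposes of the whole chain in one line: the relation of theorem \ref{main} holds in $\MM_{\rm hom}$, hence $F$ is \emph{motivated by} $X$ in the sense of Arapura, and \cite[Lemma 4.2]{A} says precisely that the Lefschetz standard conjecture passes from $X$ to anything motivated by it. You instead unpack this into explicit stability statements (products, smooth blow-ups, finite quotients, direct summands, Tate twists), which makes the argument more self-contained but obliges you to verify each step. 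One remark that simplifies your own account: the finite-quotient descent from $\mathrm{Bl}_\Delta(X\times X)$ to $X^{[2]}$, which you single out as the delicate point, is not genuinely harder than the first reduction you already take for granted. With $\QQ$-coefficients, $h(X^{[2]})$ is the direct summand of $h(\mathrm{Bl}_\Delta(X\times X))$ cut out by the algebraic averaging projector $\tfrac12(\Delta+\Gamma_\sigma)$, so it is covered by exactly the same lemma (stability of the Lefschetz conjecture under direct summands defined by algebraic projectors, up to Tate twist) that you invoke to pass from $X^{[2]}$ to $F$. Once that single stability lemma is granted --- and it is the content of \cite[Lemma 4.2]{A} --- your argument and the paper's coincide.
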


\begin{proof} Theorem \ref{main} implies there is the same relation in $\MM_{\rm hom}$. This means that $F$ is motivated by $X$, in the sense of \cite{A}. The validity of the standard conjecture of Lefschetz type for $F$ then follows from its validity for $X$, by virtue of \cite[Lemma 4.2]{A}.
\end{proof}

\begin{remark} In case $X$ is a cubic of dimension $n=4$, the Fano variety $F=F(X)$ is a hyperk\"ahler fourfold of $K3^{[2]}$ type \cite{BD}. As such, the standard conjecture of Lefschetz type for $F$ also follows from \cite[Theorem 1.1]{ChMa}.
\end{remark}

\section{Fano varieties of cubic fivefolds}

\subsection{The motive of $F$}

\begin{theorem}\label{main5} Let $F$ be the Fano variety of lines on a smooth cubic fivefold. There is a curve $C\subset F$ and a homomorphism
  \[ h(F)\ \to\ h^2(C\times C)(2)\oplus \bigoplus_{i=0}^5 \bigl( h^1(C)\oplus h^1(C)\bigr)(i)\oplus \bigoplus_{i=1}^4 h^1(C)(i)\oplus \bigoplus \LL(n_j)\ \ \ \hbox{in}\ \MM_{\rm rat}\ \]
  admitting a left--inverse
  (i.e., the motive $h(F)$ can be identified with a direct summand of the right--hand side).
  \end{theorem}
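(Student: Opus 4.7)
The plan is to apply Theorem \ref{main} to move from $F$ to $X^{[2]}$, decompose $h(X^{[2]})$ via the Hilbert-scheme blowup construction, and finally reduce $h(X)$ for the cubic fivefold $X$ to motives of curves using the level-$1$ structure of $H^{5}(X)$.

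By Theorem \ref{main}, $h(F)(2)$ is a direct summand of $h(X^{[2]})$, so $h(F)$ is a direct summand of $h(X^{[2]})(-2)$. The Hilbert scheme $X^{[2]}$ is the $S_{2}$-quotient of the blowup $\tilde Z$ of $X\times X$ along the diagonal $\Delta\cong X$, so with rational coefficients $h(X^{[2]})$ is the $S_{2}$-invariant direct summand of $h(\tilde Z)$. The blowup formula (diagonal of codimension $5$) then gives
\[ h(\tilde Z)\cong h(X\times X)\oplus\bigoplus_{i=1}^{4}h(X)(i)\ \ \ \hbox{in}\ \MM_{\rm rat}\ .\]
So it suffices to realize a motive of the form $h(X\times X)\oplus\bigoplus_{i=1}^{4}h(X)(i)$ as a direct summand of the motive asserted in the theorem (up to the overall shift by $(-2)$).

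The smooth cubic fivefold $X$ has vanishing odd cohomology outside the middle, so the Lefschetz decomposition splits $h(X)\cong h^{5}(X)\oplus T_{X}$ in $\MM_{\rm rat}$, where $T_{X}$ is a sum of six Lefschetz summands $\LL(n_{j})$. The Hodge structure on $H^{5}(X,\QQ)$ is of level $1$---only the types $(3,2)$ and $(2,3)$ appear---and the essential input will be that (as established in \cite{moi}, via the geometry of lines on $X$) there exists a smooth projective curve $C\subset F$ and a correspondence realizing $h^{5}(X)$ as a direct summand of $h^{1}(C)(2)$ in $\MM_{\rm rat}$. This is the motivic incarnation of the generalized Hodge conjecture for cubic fivefolds, and is the crucial ingredient.

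Expanding $h(X\times X)=h(X)\otimes h(X)$, the Tate-Tate part contributes Lefschetz summands; the twelve cross-terms $T_{X}\otimes h^{5}(X)$ and $h^{5}(X)\otimes T_{X}$ are summands of $\bigoplus_{k=0}^{5}(h^{1}(C)\oplus h^{1}(C))(k+2)$; and the square $h^{5}(X)\otimes h^{5}(X)$ embeds as a direct summand of $h^{1}(C)(2)\otimes h^{1}(C)(2)\cong\bigl(h^{1}(C)\otimes h^{1}(C)\bigr)(4)$, which the K\"unneth decomposition of $h(C\times C)$ exhibits as a summand of $h^{2}(C\times C)(4)$. Likewise $\bigoplus_{i=1}^{4}h(X)(i)$ yields $\bigoplus_{i=1}^{4}h^{1}(C)(i+2)$ plus further Lefschetz summands. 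Tensoring by $(-2)$ then collects these as $h^{2}(C\times C)(2)\oplus\bigoplus_{i=0}^{5}(h^{1}(C)\oplus h^{1}(C))(i)\oplus\bigoplus_{i=1}^{4}h^{1}(C)(i)\oplus\bigoplus\LL(n_{j})$, as required. The main obstacle is the input from \cite{moi}: proving that $h^{5}(X)$ is a direct summand of $h^{1}(C)(2)$ for a curve $C\subset F$; this is what elevates the Hodge-theoretic level-$1$ data of $H^{5}(X)$ to a motivic statement and is the non-formal heart of the argument.
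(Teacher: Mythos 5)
Your overall architecture agrees with the paper's: reduce $h(F)$ to $h(X\times X)\oplus\bigoplus_{i=1}^4 h(X)(i)$ via theorem \ref{main} and the blow--up of $X\times X$ along the diagonal, then reduce $h(X)$ to $h^1(C)(2)$ plus Tate twists and do the K\"unneth bookkeeping. However, the step you yourself identify as ``the non--formal heart of the argument'' --- that $h^5(X)$ is a direct summand of $h^1(C)(2)$ \emph{in} $\MM_{\rm rat}$ for a curve $C\subset F$ --- is not proven; you cite it to \cite{moi}, but \cite{moi} only provides theorem \ref{main} (the relation between $h(F)$, $h(X)$ and $h(X^{[2]})$), not this splitting. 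Since the entire theorem rests on this one input, deferring it to a reference that does not contain it is a genuine gap.

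The paper fills this gap in three moves, and you should supply some version of them. First, Lewis's cylinder homomorphism \cite{Lew} produces a complete intersection curve $C\subset F$ and a correspondence $\Psi\in A^3(X\times C)$ with $\Psi^\ast\Psi_\ast$ a nonzero multiple of the identity on $H^5(X)$; together with the algebraicity of $H^i(X)$ for $i\neq 5$ this gives a left--inverse to $\Psi\colon h(X)\to h^1(C)(2)\oplus\bigoplus_{i=0}^5\LL(i)$ in $\MM_{\rm hom}$ only. Second, one needs $X$ itself to have finite--dimensional motive in order to lift this splitting to rational equivalence; this follows from $A^\ast_{AJ}(X)=0$ for smooth cubic fivefolds (\cite{Lew}, \cite{Otw}, \cite{HI}) combined with Vial's criterion \cite[Theorem 4]{V2}. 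Third, lemma \ref{ok} then promotes the homological left--inverse to a left--inverse in $\MM_{\rm rat}$. Your appeal to ``the level--$1$ structure of $H^5(X)$'' captures only the Hodge--theoretic shadow of the first move; without the vanishing of $A^\ast_{AJ}(X)$, the finite--dimensionality of $h(X)$, and the lifting lemma, there is no way to pass from a cohomological statement to the Chow--motivic one the theorem asserts. The remaining combinatorics in your last paragraph (splitting off the Tate summands of $h(X)$, distributing the tensor products, identifying $h^1(C)\otimes h^1(C)$ inside $h^2(C\times C)$) matches the paper and is fine.
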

  
  \begin{proof} Let $X\subset\PP^6(\C)$ be a smooth cubic fivefold, and let $F=F(X)$ be the Fano variety of lines on $X$. Lewis' cylinder homomorphism \cite{Lew} furnishes a certain complete intersection curve $C\subset F$, and a correspondence $\Psi\in A^{3}(X\times C)$ such that the composition
    \[  H^5(X)\ \xrightarrow{\Psi_\ast}\ H^1(C)\ \xrightarrow{\Psi^\ast}\ H^5(X) \]
    is a multiple of the identity.
  Since the other cohomology groups $H^i(X)$ for $i\not= 5$ are algebraic, it follows that the homomorphism of motives 
   \[ \Psi\colon\ \ h(X)\ \to\  h^1(C)(2)\oplus \bigoplus_{i=0}^5 \LL(i)\ \ \ \hbox{in}\ \MM_{\rm hom}\ \]
   admits a left--inverse.
  As is well--known, 
    \[ A^\ast_{AJ}(X)=0 \]
  \cite{Lew} (cf. \cite{Otw} or \cite{HI} for alternative proofs). As such, $X$ has finite--dimensional motive \cite[Theorem 4]{V2}. Using lemma \ref{ok}, this implies the homomorphism of Chow motives
  \[  \Psi\colon\ \ h(X)\ \to\  h^1(C)(2)\oplus \bigoplus_{i=0}^5 \LL(i)\ \ \ \hbox{in}\ \MM_{\rm rat}\ \] 
  admits a left--inverse.
  
  Theorem \ref{main} implies there is a homomorphism
   \[ h(F)\ \to\ h(X^{[2]})(-2)\ \ \ \hbox{in}\ \MM_{\rm rat}\ \]
   admitting a left--inverse. Since $X^{[2]}$ is dominated by the blow--up of $X\times X$ along the diagonal, there is also a homomorphism
   \[ h(X^{[2]})\ \to\ h(X\times X)\oplus \bigoplus_{i=1}^4 h(X)(i)\ \ \ \hbox{in}\ \MM_{\rm rat}\ \]
   admitting a left--inverse. Composing, we find a homomorphism
   \[  \Gamma\colon\ \ h(F)\ \to\ h(X\times X)(-2)\oplus \bigoplus_{i=1}^4 h(X)(i-2)\ \ \ \hbox{in}\ \MM_{\rm rat} \]
    admitting a left--inverse.   
    
   The composition
   \[ \begin{split} h(F)\ \xrightarrow{\Gamma}\  &h(X\times X)(-2)\oplus \bigoplus_{i=1}^4 h(X)(i-2)\ \xrightarrow{(\Psi\times\Psi,\Psi,\Psi,\Psi,\Psi)}\\
       &\Bigl(h^1(C)(2)\oplus \bigoplus_{i=0}^5 \LL(i)\Bigr)^{\otimes 2}(-2) \oplus \bigoplus_{i=1}^4 \Bigl(  h^1(C)(2)\oplus \bigoplus_{j=0}^5 \LL(j)  \Bigr) (i-2)\ \to\ 
        \\
        & h^2(C\times C)(2)\oplus \bigoplus_{i=0}^5 \bigl( h^1(C)\oplus h^1(C)\bigr)(i)\oplus \bigoplus_{i=1}^4 h^1(C)(i)\oplus \bigoplus \LL(n_j)\ \ \ \hbox{in}\ \MM_{\rm rat}\\
         \end{split} \]
   has a left--inverse (the last arrow is just regrouping and identifying $h^1(C)\otimes h^1(C)$ with a direct summand of $h^2(C\times C)$).      
        \end{proof} 


\begin{corollary}\label{Chow} Let $F$ be the Fano variety of lines on a smooth cubic fivefold. Then 
   \[  \begin{split}  
                      & A^j_{AJ}(F)=0\ \ \ \hbox{for\ all\ }j\not= 4\ .\\
                      \end{split}\]
            \end{corollary}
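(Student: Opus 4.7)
The plan is to leverage the motivic inclusion in Theorem \ref{main5}: $h(F)$ sits as a direct summand of
\[ M := h^2(C\times C)(2)\oplus\bigoplus_{i=0}^5\bigl(h^1(C)\oplus h^1(C)\bigr)(i)\oplus\bigoplus_{i=1}^4 h^1(C)(i)\oplus\bigoplus\LL(n_j). \]
Since both the inclusion and its retraction are realized by Chow correspondences, and correspondences commute with Abel--Jacobi maps, $A^j_{AJ}(F)$ is a direct summand of
\[ A^j_{AJ}(M)=A^j_{AJ}\bigl(h^2(C\times C)(2)\bigr)\oplus\bigoplus_i A^j_{AJ}\bigl(h^1(C)(i)\bigr)^{\oplus m_i}\oplus\bigoplus_k A^j_{AJ}\bigl(\LL(n_k)\bigr). \]
It therefore suffices to verify that each summand on the right vanishes for every $j\neq 4$.

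For the Lefschetz summands $\LL(n_k)$ one has $A^\bullet(\LL(n_k))=\QQ$ concentrated in the single codimension $n_k+1$, and that sole class is algebraic, so their AJ-trivial parts vanish in all codimensions. For $h^1(C)(i)$ the relation $A^j(h^1(C)(i))=A^{j-i}(h^1(C))$ forces $j=i+1$; in that one codimension the group is $A^1_{hom}(C)=\mathrm{Pic}^0(C)_\QQ$, and the Abel--Jacobi map of a smooth projective curve is an isomorphism onto its Jacobian, so the AJ-trivial part is zero. For $h^2(C\times C)(2)$ we have $A^j=A^{j-2}(h^2(C\times C))$, which is zero outside $j\in\{2,3,4\}$ because $C\times C$ is a surface. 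At $j=2$ the weight-2 K\"unneth projector annihilates the fundamental class, so $A^0(h^2(C\times C))=0$. At $j=3$ the group $A^1(h^2(C\times C))$ carries no homologically trivial classes: by the Poincar\'e decomposition $\mathrm{Pic}^0(C\times C)_\QQ\cong\mathrm{Pic}^0(C)_\QQ^{\oplus 2}$, all such classes are absorbed by the $h^1(C\times C)$-summand of the K\"unneth decomposition, whence $A^1_{AJ}(h^2(C\times C))=0$ trivially. The only remaining case is $j=4$, namely $A^2(h^2(C\times C))=A^2(h^1(C)\otimes h^1(C))$; this is potentially AJ-nontrivial (it carries the Bloch-type zero-cycles of $C\times C$), and it is exactly the case excluded from the statement.

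Assembling the two analyses, for every $j\neq 4$ each summand of $M$ has trivial $A^j_{AJ}$, so the direct summand $A^j_{AJ}(F)$ vanishes as well. I do not foresee a serious obstacle: the substantive geometric input --- Lewis's cylinder homomorphism, the blow-up description of $X^{[2]}$, and the finite-dimensionality of $h(X)$ --- is already encoded in Theorem \ref{main5}, and what remains is bookkeeping about Chow groups of curves and of $C\times C$. The one point requiring care is the naturality of the Abel--Jacobi map under correspondences, so that the motivic splitting descends to a splitting of kernels of AJ; this is a standard fact.
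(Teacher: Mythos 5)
Your argument is correct and is essentially the paper's own proof: both deduce from Theorem \ref{main5} a correspondence-induced split injection of $A^j_{AJ}(F)$ into the Abel--Jacobi kernels of the summands, of which only $A^{j-2}_{AJ}(C\times C)$ can be nonzero, and only when $j=4$. The paper states this in one line; your version merely spells out the (correct) bookkeeping for each summand.
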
 
            
      \begin{proof} This is immediate from theorem \ref{main5}: indeed, there is a split injection
      \[ A^j_{AJ}(F)\ \to\ A^{j-2}_{AJ}(C\times C)\ ,\]
      and the right--hand side is zero for $j\not=4$.
      \end{proof}                
            
 \begin{corollary} Let $F$ be the Fano variety of lines on a smooth cubic fivefold. The generalized Hodge conjecture is true for $F$.
 \end{corollary}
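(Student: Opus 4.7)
The plan is to use Theorem \ref{main5} to reduce the generalized Hodge conjecture (GHC) for $F$ to the same statement for curves and for a product of two curves, where it is classical. Theorem \ref{main5} presents $h(F)$ as a direct summand in $\MM_{\rm rat}$ of a motive $M$ which decomposes as a direct sum of Tate twists of $h^2(C\times C)$, of $h^1(C)$, and of the unit motive. Reducing modulo homological equivalence yields, for every $k$, an embedding of rational Hodge structures $H^k(F,\QQ)\hookrightarrow H^k(M)$ that realises $H^k(F,\QQ)$ as a direct summand of a sum of Tate twists of $H^1(C)$, of $H^2(C\times C)$, and of trivial Tate structures.

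Next, I would verify GHC for each of the building blocks. For a smooth projective curve $C$, the non-Tate piece $H^1(C)$ has Hodge types $(1,0)$ and $(0,1)$ and hence coniveau zero, so GHC is automatic. For the surface $C\times C$, the K\"unneth decomposition reduces matters to $H^2(C\times C)$: the maximal sub-Hodge structure of coniveau $\geq 1$ therein is purely of type $(1,1)$, and by the Lefschetz $(1,1)$-theorem it is generated by the classes of algebraic divisors, which is exactly what GHC requires. The odd-degree pieces $H^1(C\times C)$ and $H^3(C\times C)$ split by K\"unneth into Tate twists of $H^1(C)$ supported on the coordinate curves $C\times \{pt\}$ and $\{pt\}\times C$; the remaining degrees are Tate and present no obstruction.

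Finally, I would transfer these supports from the building blocks back to $F$ via the left-inverse correspondence $M\to h(F)$ provided by Theorem \ref{main5}. Given a rational sub-Hodge structure $V\subset H^k(F,\QQ)$ of coniveau $\geq c$, its image in $H^k(M)$ decomposes over the summands, each component still of coniveau $\geq c$ and, by the previous paragraph, supported on a subvariety of the appropriate codimension in its ambient building block. The main subtlety is to verify that, after applying the left-inverse correspondence, the resulting support in $F$ itself still has codimension $\geq c$. This amounts to bookkeeping the Tate twists $(n_j)$ appearing in the summands of $M$ together with the codegrees of the correspondences produced in the proof of Theorem \ref{main5}. The cleanest way to package this is to work throughout with the motivic formulation of GHC, which asserts that the maximal sub-Hodge structure of coniveau $\geq c$ in a Chow motive $N$ is cut out by an algebraic projector and is isomorphic to a sub-motive of $h(Z)(-c)$ for some smooth projective $Z$; in this formulation GHC is manifestly preserved by direct summands and by Tate twists of Chow motives, so it holds for $F$ once it is established for $C$ and $C\times C$.
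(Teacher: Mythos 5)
Your proposal is correct in outline, but it takes a different route from the paper. The paper's own proof is very short: it invokes Corollary \ref{Chow} (vanishing of $A^j_{AJ}(F)$ for $j\neq 4$) to conclude that $\hbox{Niveau}(A^\ast F)\le 2$, and then cites the Bloch--Srinivas decomposition-of-the-diagonal machinery of \cite{small}, which packages in one stroke both the reduction of the cohomology of $F$ to that of surfaces and the verification that the resulting supports have the right codimension. You instead work directly with the motivic splitting of Theorem \ref{main5}, prove the generalized Hodge conjecture for the building blocks $C$ and $C\times C$ (correctly: coniveau $\geq 1$ in $H^2$ of a surface is handled by Lefschetz $(1,1)$, and the odd-degree and Tate pieces are immediate), and then transfer back through the left-inverse correspondence. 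The two arguments are close in spirit --- both ultimately reduce to the GHC for surfaces --- but yours skips the Bloch--Srinivas step entirely, at the price of having to justify explicitly the ``main subtlety'' you flag: that pushing a sub-Hodge structure supported in codimension $c'$ on $Y_i$ through a correspondence raising cohomological degree by $2n_i$ yields a class supported in codimension $\geq c'+n_i$ in $F$. This is a standard lemma (it appears in Arapura \cite{A} and in Vial \cite{V4}, and is exactly the bookkeeping carried out in \cite{small}), so your argument is complete once you cite or prove it; as written, that step remains an assertion rather than a proof, which puts your write-up at roughly the same level of detail as the paper's two-line reference.
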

                 
\begin{proof} Corollary \ref{Chow} implies
   \[ \hbox{Niveau}(A^\ast F)\le 2\ ,\]
   in the language of \cite{small}. A Bloch--Srinivas argument (for instance as in loc. cit.) then implies the corollary.
  \end{proof}

\subsection{A refined Chow--K\"unneth decomposition}

\begin{definition} Let $M$ be a smooth projective variety of dimension $m$. We say that $M$ has a Chow--K\"unneth decomposition if there exists a decomposition of the diagonal
  \[ \Delta_M= \Pi_0+ \Pi_1+\cdots +\Pi_{2m}\ \ \ \hbox{in}\ A^m(M\times M)\ ,\]
  such that the $\Pi_i$ are mutually orthogonal idempotents and $(\Pi_i)_\ast H^\ast(M)= H^i(M)$.
\end{definition}

\begin{remark} The existence of a Chow--K\"unneth decomposition for any smooth projective variety is part of Murre's conjectures (cf. \cite{Mur}\, \cite{J} and section \ref{secmur} below).
\end{remark}

\begin{proposition}\label{CK} Let $F$ be the Fano variety of lines on a smooth cubic fivefold. There exists a Chow--K\"unneth decomposition 
  \[ \Delta_F= \Pi_0+\Pi_1 +\cdots +\Pi_{12}\ \ \ \hbox{in}\ A^6(F\times F) \]
 (i.e., the $\Pi_i$ are mutually orthogonal idempotents summing to the diagonal). Moreover, 
    one may assume $\Pi_i$ factors over a curve for $i$ odd (i.e., for any odd $i$ there exists a curve $C_i$ and correspondences $\Gamma_i\in A^{13-i\over 2}(C_i\times F)$, $\Psi_i\in A^{i+1\over 2}(F\times C_i)$ such that $\Pi_i=\Gamma_i\circ \Psi_i$ in $A^6(F\times F)$), and $\Pi_{2i}$ factors over a $0$--dimensional variety for $i\not=3$, and $\Pi_6$ factors over a surface.  
       
  There is a further splitting in orthogonal idempotents
     \[ \Pi_6 = \Pi_6^{tr} + \Pi_6^{alg}\ \ \ \hbox{in}\ A^6(F\times F)  \ ,\]    
     such that
     \[ \begin{split}  &(\Pi_6^{tr})_\ast H^\ast(F,\QQ)=  H^6_{tr}(F)\ ,  \\
                        &(\Pi_6^{tr})_\ast A^\ast(F)= A^4_{AJ}(F)\ .\\
                        \end{split}\]
       Here, $H^6_{tr}(F)$ denotes the orthogonal complement to $N^3 H^6(F)$ under the cup--product pairing.                 
               \end{proposition}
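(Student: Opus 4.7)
The plan is to pull back a Chow--K\"unneth decomposition from the ``model motive'' $M$ appearing on the right--hand side of Theorem \ref{main5}, via the split injection into it, and to use finite--dimensionality of $h(F)$ (Corollary \ref{corfin}) to upgrade the resulting approximate decomposition into an honest one. Write $\iota\colon h(F)\to M$ and $p\colon M\to h(F)$ for the split injection and its left--inverse furnished by Theorem \ref{main5}, so that $p\circ\iota=\hbox{id}_{h(F)}$.

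First I would equip $M$ with its natural CK decomposition. Using a rational point on $C$, the curve splits canonically as $h(C)=h^0(C)\oplus h^1(C)\oplus h^2(C)$, and the K\"unneth formula gives a CK decomposition of $h(C\times C)$ with a distinguished projector onto $h^2(C\times C)$. The remaining summands of $M$---the various $h^1(C)(i)$ and the Lefschetz motives $\LL(n_j)$---are already concentrated in a single weight. Assembling these produces mutually orthogonal CK projectors $\pi_i^M$ on $M$ summing to $\hbox{id}_M$, each factoring through a curve (for odd $i$), through a point (for even $i\ne 6$), or through the surface $C\times C$ (for $i=6$). Setting $\Pi_i:=p\circ\pi_i^M\circ\iota$, one immediately has $\sum_i\Pi_i=\hbox{id}_{h(F)}$ and $(\Pi_i)_\ast H^\ast(F)=H^i(F)$, and each $\Pi_i$ inherits the required factorization from $\pi_i^M$.

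The $\Pi_i$ are not yet mutually orthogonal idempotents: the defects $\Pi_i^2-\Pi_i$ and $\Pi_i\Pi_j$ ($i\ne j$) act as zero on cohomology and so are numerically trivial. By Corollary \ref{corfin} and Theorem \ref{nilp} these defects are nilpotent, and a standard idempotent--lifting argument converts the $\{\Pi_i\}$ into a genuine CK decomposition; by carrying out the lifting inside the sub--motive of $M$ through which the relevant $\pi_i^M$ factors and conjugating back by $p,\iota$, the factorizations through a curve, point, or surface are preserved. For the refined splitting of $\Pi_6$, I would invoke the Kahn--Murre--Pedrini decomposition of the motive of a surface to write $h^2(C\times C)=h^2_{alg}(C\times C)\oplus t_2(C\times C)$, where $h^2_{alg}$ is a sum of Lefschetz motives spanned by algebraic classes and $t_2$ is its orthogonal complement. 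Combined with the Lefschetz summands of $M$ contributing to weight $6$, this splits $\Pi_6$ into a Lefschetz part $\Pi_6^{alg}$ and a transcendental part $\Pi_6^{tr}$. The identity $(\Pi_6^{tr})_\ast H^\ast(F)=H^6_{tr}(F)$ then follows because $t_2(C\times C)(2)$ realizes precisely to the orthogonal complement of algebraic classes in $H^6(F)$; and $(\Pi_6^{tr})_\ast A^\ast(F)=A^4_{AJ}(F)$ follows from Corollary \ref{Chow} (which forces all Abel--Jacobi trivial cycles into codimension $4$) together with the observation that $\Pi_6^{alg}$, factoring through Lefschetz summands, annihilates Abel--Jacobi trivial cycles.

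The hard part will be the idempotent--lifting bookkeeping: the nilpotent corrections to the $\Pi_i$ must be chosen so as to preserve the factorization required by the statement. The cleanest approach is probably to perform the entire lifting inside $\hbox{End}_{\MM_{\rm rat}}(M)$---where the block decomposition of $M$ is manifest and the ideal of numerically trivial endomorphisms is nilpotent by finite--dimensionality of each block---and only then push the corrected idempotents down to $h(F)$ via $p,\iota$. A secondary technical point is the honest orthogonality (not merely numerical) of $\Pi_6^{alg}$ and $\Pi_6^{tr}$, which is again handled by the same finite--dimensionality input.
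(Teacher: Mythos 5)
Your proposal is essentially correct, but it takes a genuinely different (and much more hands--on) route than the paper. The paper's proof is a two--line reduction to Vial's general machinery: Theorem \ref{main5} shows the cohomology of $F$ is spanned, via correspondences, by the cohomology of a surface, so $F$ satisfies condition (*) of \cite{V4}; finite--dimensionality gives condition (**); and then \cite[Theorems 1 and 2]{V4} directly produce a refined Chow--K\"unneth decomposition $\Pi_{i,j}$ projecting onto $\gr^j_{\wt N}H^i(F)$, from which all the stated properties (the factorizations, the splitting $\Pi_6=\Pi_6^{tr}+\Pi_6^{alg}$, and the action on $A^4_{AJ}$) are read off --- the only extra remark needed being that Vial's niveau filtration $\wt N^\ast$ agrees with the coniveau filtration $N^\ast$ here, because it does so for surfaces. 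What you propose is, in effect, a re--derivation of Vial's theorem in this special case: pull back the evident CK projectors of the model motive $M$, correct the numerically trivial defects by nilpotence and idempotent lifting, and split $\Pi_6$ via the Kahn--Murre--Pedrini decomposition of $h^2(C\times C)$. This is a legitimate alternative and has the virtue of making the surface $C\times C$ explicit, but the step you yourself flag --- arranging the orthogonalization so that the corrected idempotents still factor through a curve, point, or surface --- is precisely the nontrivial content of Vial's theorem, and your suggested fix (lifting inside $\hbox{End}_{\MM_{\rm rat}}(M)$) does not quite address it, since the failure of orthogonality of the $\Pi_i=p\circ\pi_i^M\circ\iota$ comes from $\iota\circ p\neq\hbox{id}_M$ and therefore lives in $\hbox{End}(h(F))$, not in the individual blocks of $M$. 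The standard way out is to note that the lifted orthogonal system can be written as $\wt\Pi_i=u\,P(\Pi_i)\,u^{-1}$ with $P$ a polynomial without constant term and $u$ a unit congruent to the identity modulo the nilpotent ideal, so every term still contains a factor of $\Pi_i$ and hence factors through the same variety; if you supply that (or simply cite \cite{V4}, as the paper does), your argument closes. Your verification of $(\Pi_6^{tr})_\ast A^\ast(F)=A^4_{AJ}(F)$ via corollary \ref{Chow} and the vanishing of $A^\ast_{AJ}$ of curves and points is correct and is a reasonable substitute for the paper's appeal to \cite[Theorem 2]{V4}.
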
          

\begin{proof} This follows from work of Vial \cite{V4}. Indeed, using theorem \ref{main5} one sees that the cohomology of $F$ is spanned (via the action of correspondences) by the cohomology of a surface. This implies that $F$ verifies condition (*) of \cite{V4}. Since $F$ has finite--dimensional motive, $F$ also verifies condition (**) of loc. cit. Thus, \cite[Theorems 1 and 2]{V4} apply, which means there is a refined Chow--K\"unneth decomposition $\Pi_{i,j}$ such that $\Pi_{i,j}$ acts on cohomology as a projector onto $\gr^j_{N} H^i(F)$, where $N^\ast$ is the coniveau filtration (here we use that Vial's niveau filtration $\wt{N}^\ast$ coincides with $N^\ast$ on $H^\ast(F)$, since $N^\ast=\wt{N}^\ast$ for surfaces).

We now remark that
  \[  \Pi_i= {\displaystyle\sum_j} \Pi_{i,j}= \Pi_{i,\lceil {i-1\over 2}\rceil}\ \ \ \hbox{for\ all\ }i\not=6\ \]
 (this follows from \cite[Theorem 2]{V4} since $H^i(F)= N^{\lceil {i-1\over 2}\rceil} H^i(F)$ for all $i\not=6$).
 
 For $i=6$, we define
   \[  \Pi_6^{tr}:=\Pi_{6,2}\ ,\ \ \ \Pi_6^{alg}:= \Pi_{6,3}\ \ \ \in A^6(F\times F)\ .\]
   It follows from \cite[Theorems 1 and 2]{V4} these Chow--K\"unneth projectors have the stated properties.
 \end{proof}

\begin{remark} The Chow--K\"unneth projectors are not uniquely defined. Yet, the motive
  \[ t_6(F):=(F,\Pi_6^{tr},0)\ \ \in \MM_{\rm rat} \]
  is well--defined up to isomorphism; this follows from \cite[Theorem 7.7.3]{KMP} and \cite[Proposition 1.8]{V4}.
The motive $t_6(F)$ can be seen as an analogue of the ``transcendental part'' of the motive of a surface $t_2(S)$ constructed for any surface $S$ in \cite{KMP}.
\end{remark}

\subsection{Bloch's conjecture}

\begin{proposition}\label{propbloch} Let $F$ be the Fano variety of lines on a smooth cubic fivefold, and let $\Gamma\in A^6(F\times F)$ be a correspondence.

\noindent
(\rom1) Assume 
  \[ \Gamma_\ast=\hbox{id}\colon\ \ H^{4,2}(F)\ \to\ H^{4,2}(F)\ .\]
  Then
  \[ \Gamma_\ast\colon\ \ A^4_{AJ}(F)\ \to\ A^4_{AJ}(F) \]
  is an isomorphism.
  
\noindent
(\rom2) Assume
\[   \Gamma_\ast\colon\ \ H^{4,2}(F)\ \to\ H^{4,2}(F)\ \]
is $0$.
  Then there exists $N$ such that
  \[ (\Gamma^{\circ N})_\ast\colon\ \ A^4_{AJ}(F)\ \to\ A^4_{AJ}(F) \]
  is $0$.
\end{proposition}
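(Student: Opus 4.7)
The plan is to reduce the claim to a nilpotence argument on the transcendental motive $t_6(F)$ from proposition \ref{CK}. Put
  \[ \Gamma_{tr}:=\Pi_6^{tr}\circ\Gamma\circ\Pi_6^{tr}\ \in\ A^6(F\times F)\ .\]
Since the action of correspondences is compatible with the cycle class and Abel--Jacobi maps, $\Gamma_\ast$ preserves $A^4_{AJ}(F)$. Combined with the fact that $(\Pi_6^{tr})_\ast$ is the identity on $A^4_{AJ}(F)=(\Pi_6^{tr})_\ast A^\ast(F)$ (proposition \ref{CK}), a short computation yields $\Gamma_\ast=\Gamma_{tr,\ast}$ as operators on $A^4_{AJ}(F)$. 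It therefore suffices to analyze $\Gamma_{tr}$ as an endomorphism of $t_6(F)$.

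The Hodge-theoretic key step is the identification
  \[ H^6_{tr}(F)\otimes\C\ =\ H^{4,2}(F)\oplus H^{2,4}(F)\ .\]
By theorem \ref{main5}, the motive $h(F)$ is a direct summand of $h^2(C\times C)(2)$ together with twists of $h^1(C)$ and Lefschetz motives. In cohomological degree $6$, the twists $h^1(C)(i)$ live in odd degree and the Lefschetz summands contribute only Tate classes, so the only possible source of non-algebraic classes in $H^6(F)$ is $h^2(C\times C)(2)$. By the Lefschetz $(1,1)$ theorem applied to the smooth projective surface $C\times C$, the $(1,1)$-part of $H^2(C\times C)$ is algebraic; hence the transcendental contribution of $h^2(C\times C)(2)$ to $H^6(F)$ is concentrated in Hodge types $(4,2)$ and $(2,4)$, which gives the claimed decomposition.

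For part (\rom1), the assumption, together with Hodge symmetry (as $\Gamma$ is defined over $\QQ$), yields $\Gamma_{tr,\ast}=\hbox{id}$ on $H^6_{tr}(F)$. Hence $\epsilon:=\Gamma_{tr}-\Pi_6^{tr}\in A^6(F\times F)$ acts as zero on all of $H^\ast(F)$ and is in particular numerically trivial. Since $F$ has finite--dimensional motive (corollary \ref{corfin} applied to the cubic fivefold $X$, whose motive is finite--dimensional), Kimura's theorem \ref{nilp} gives $\epsilon^{\circ N}=0$ in $A^6(F\times F)$ for some $N$. In $\hbox{End}_{\MM_{\rm rat}}(t_6(F))$ this means $\Gamma_{tr}=\hbox{id}_{t_6(F)}+\epsilon$ is invertible with inverse $\sum_{k=0}^{N-1}(-\epsilon)^{\circ k}$, so $\Gamma_{tr,\ast}$ -- and hence $\Gamma_\ast$ -- is an isomorphism on $A^4_{AJ}(F)=A^\ast(t_6(F))$. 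Part (\rom2) is analogous: now $\Gamma_{tr}$ itself acts as zero on $H^\ast(F)$, hence $\Gamma_{tr}^{\circ N}=0$ by theorem \ref{nilp}, and $(\Gamma^{\circ N})_\ast=\Gamma_{tr,\ast}^N=0$ on $A^4_{AJ}(F)$.

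The principal obstacle is controlling the Hodge types appearing in $H^6_{tr}(F)$: without ruling out a transcendental $(3,3)$-contribution, the hypothesis on $H^{4,2}$ would not determine the action on $H^6_{tr}$, and the nilpotence step could not be initiated. The refined motivic description of $h(F)$ coming from theorem \ref{main5}, not merely its finite--dimensionality, is essential for this.
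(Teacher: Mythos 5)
Your overall strategy is the paper's: pass to the transcendental projector $\Pi_6^{tr}$, show the relevant correspondence is homologically trivial, and finish with Kimura nilpotence plus a geometric--series inverse; the reduction $\Gamma_\ast=\Gamma_{tr,\ast}$ on $A^4_{AJ}(F)$ and the nilpotence endgame are correct. But the step you yourself flag as the key one contains a genuine error. The Lefschetz $(1,1)$ theorem says that the \emph{rational} (or integral) classes of type $(1,1)$ on $C\times C$ are algebraic; it does not say that the complex vector space $H^{1,1}(C\times C)$ is spanned by algebraic classes. For a curve of genus $g$ one has $\dim_{\C} H^{1,1}(C\times C)=2g^2+2$ while the N\'eron--Severi rank is typically $3$, so $H^2_{tr}(C\times C)$ has a large $(1,1)$-component. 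Accordingly $H^6_{tr}(F)\otimes\C$ is \emph{not} $H^{4,2}(F)\oplus H^{2,4}(F)$ in general: it has a nonzero $(3,3)$-part (concretely, $H^6(F)$ contains $\wedge^2H^5(X)(2)$, whose middle piece $H^{3,2}(X)\wedge H^{2,3}(X)$ has dimension $21\times 21$ and is far from being spanned by algebraic classes for generic $X$). So your hypothesis on $H^{4,2}$ does not, as written, determine the action of $\Gamma$ on all of $H^6_{tr}(F)$, and the homological triviality of $\epsilon$ is not established.

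The gap is repairable, and the repair is what the paper's unexplained assertion ``$\Gamma\circ\Pi_6^{tr}-\Pi_6^{tr}=0$ in $H^{12}(F\times F)$'' is implicitly using. What one needs is not that $H^6_{tr}(F)$ has no $(3,3)$-classes, but that it contains no nonzero \emph{rational sub--Hodge structure} of pure type $(3,3)$, i.e.\ no nonzero Hodge classes. This follows from proposition \ref{CK}: $\Pi_6^{tr}$ factors over a surface $S$, say $\Pi_6^{tr}=\Gamma^\prime\circ\Psi^\prime$, so a Hodge class $\alpha\in H^6_{tr}(F)$ satisfies $\alpha=\Gamma^\prime_\ast\Psi^\prime_\ast\alpha$, where $\Psi^\prime_\ast\alpha$ is a rational $(1,1)$-class on $S$, hence algebraic by the (now correctly applied) Lefschetz $(1,1)$ theorem; thus $\alpha$ is algebraic and lies in $N^3H^6(F)\cap H^6_{tr}(F)=0$. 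Since $\epsilon_\ast=\Gamma_{tr,\ast}-\hbox{id}$ is a correspondence--induced (hence rational) morphism of Hodge structures on $H^6_{tr}(F)$ killing $H^{4,2}\oplus H^{2,4}$ (the latter by conjugation, as you note) and $H^6_{tr}(F)$ has no $(6,0)$ or $(5,1)$ part by theorem \ref{main5}, its image is a rational sub--Hodge structure of pure type $(3,3)$, hence zero. With this substitution your argument closes and coincides with the paper's.
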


\begin{proof} 

\noindent
(\rom1) Let $\Pi_i$ and $\Pi_6=\Pi_6^{tr}+\Pi_6^{alg}$ be a refined Chow--K\"unneth decomposition as in proposition \ref{CK}.  By assumption, we have
   \[  \Gamma\circ\Pi_6^{tr}-\Pi_6^{tr}=0\ \ \ \hbox{in}\ H^{12}(F\times F)\ .\]
   It follows there is $N\in\NN$ such that
   \[ \bigl(\Gamma\circ\Pi_6^{tr}-\Pi_6^{tr}\bigr)^{\circ N}=0\ \ \ \hbox{in}\ A^{6}(F\times F)\ .\]
   Developing, this implies
   \[ (\Gamma\circ\Pi_6^{tr})^{\circ N} + \cdots +(-1)^N \Pi_6^{tr}=0\ \ \ \hbox{in}\ A^6(F\times F)\ ,\]
   where the dots indicate compositions of $\Gamma$ and $\Pi_6^{tr}$, i.e. elements of type
     \[  \pm\, \Gamma^{\circ r_1}\circ\Pi_6^{tr}\circ \Gamma^{\circ r_2}\circ \Pi_6^{tr}\circ \cdots \circ \Gamma^{\circ r_\ell}\circ \Pi_6^{tr} \ .\]
      Considering the action on $A^4_{AJ}(F)$ (and using that $\Pi_6^{tr}$ acts as the identity on $A^4_{AJ}(F)$), this implies
   \[   \Bigl(  \Gamma^{\circ N}+ c_{N-1} \Gamma^{\circ N-1}+\cdots + c_1 \Gamma\Bigr){}_\ast=\hbox{id}\colon\ \ \ A^4_{AJ}(F)\ \to\ A^4_{AJ}(F)\ ,\]
   where $c_j\in \NN$. This proves (\rom1), as we have constructed an inverse to $\Gamma_\ast$:
   \[ \begin{split} \Gamma_\ast &\Bigl( \Gamma^{\circ N-1} + c_{N-1} \Gamma^{\circ N-2} +\cdots +c_1\Bigr){}_\ast = \\  
       &\Bigl( \Gamma^{\circ N-1} + c_{N-1} \Gamma^{\circ N-2} +\cdots +c_1\Bigr){}_\ast \Gamma_\ast=\hbox{id}\colon\ \ \ A^4_{AJ}(F)\ \to\ A^4_{AJ}(F)\ .\\
       \end{split}\]
  
 \noindent
 (\rom2) By assumption, we have
    \[  \Gamma\circ\Pi_6^{tr}=0\ \ \ \hbox{in}\ H^{12}(F\times F)\ .\]             
  It follows there exists $N\in\NN$ such that
    \[ (\Gamma\circ\Pi_6^{tr})^{\circ N}=0\ \ \ \hbox{in}\ A^6(F\times F)\ .\]
    This implies
    \[   \bigl((\Gamma\circ\Pi_6^{tr})^{\circ N}\bigr){}_\ast=(\Gamma^{\circ N})_\ast=0\colon\ \ \     A^4_{AJ}(F)\ \to\ A^4_{AJ}(F)\ .\]    
\end{proof}

\begin{remark} Proposition \ref{propbloch} establishes a weak version of Bloch's conjecture. Indeed, it is expected that in (\rom1) one actually has that $\Gamma_\ast$ is the identity, and that in (\rom2) one has $N=1$.
\end{remark}

\subsection{Voevodsky's conjecture}

 \begin{definition}[Voevodsky \cite{Voe}]\label{sm} Let $Y$ be a smooth projective variety. A cycle $a\in A^j(Y)$ is called {\em smash--nilpotent\/} 
if there exists $m\in\NN$ such that
  \[ \begin{array}[c]{ccc}  a^m:= &\undermat{(m\hbox{ times})}{a\times\cdots\times a}&=0\ \ \hbox{in}\  A^{mj}(Y\times\cdots\times Y)_{}\ .
  \end{array}\]
  \vskip0.6cm

Two cycles $a,a^\prime$ are called {\em smash--equivalent\/} if their difference $a-a^\prime$ is smash--nilpotent. 
We will write 
    \[A^j_\otimes(Y)\subset A^j(Y)\] 
  for the subgroup of smash--nilpotent cycles.
\end{definition}

\begin{conjecture}[Voevodsky \cite{Voe}]\label{voe} Let $Y$ be a smooth projective variety. Then
  \[  A^j_{num}(Y)\ \subset\ A^j_\otimes(Y)\ \ \ \hbox{for\ all\ }j\ .\]
  \end{conjecture}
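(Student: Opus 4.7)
This is Voevodsky's smash-nilpotence conjecture in full generality, one of the central open problems in the theory of algebraic cycles; a complete proof is not currently within reach. The plan below describes the standard motivic strategy and pinpoints the obstruction to carrying it through beyond the presently known cases.

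The first step is to reformulate smash-nilpotence motivically. A cycle $a \in A^j(Y)$ determines a morphism in $\MM_{\rm rat}$ whose image in $\MM_{\rm num}$ vanishes precisely when $a$ is numerically trivial, and smash-nilpotence corresponds to nilpotence of this morphism in the tensor sense. Kimura's nilpotence theorem (theorem \ref{nilp}), together with the fact that finite-dimensional motives have nilpotent numerical radical, then implies conjecture \ref{voe} for $Y$ whenever $h(Y) \in \MM_{\rm rat}$ is Kimura finite-dimensional. In effect, this reduces the conjecture for a given $Y$ to the finite-dimensionality of $h(Y)$.

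The second step would be to widen the class of $Y$ on which the argument applies. The workable strategy is to express $h(Y)$ as a direct summand of a motive built from known finite-dimensional pieces — curves, abelian varieties, and the other entries on the short list of examples — and then invoke the closure of Kimura finite-dimensionality under direct summands. This is precisely the strategy deployed elsewhere in the present paper, for instance in theorem \ref{main5}, and it is what yields propositions \ref{propvoe} and \ref{propvoe6}.

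The main obstacle, and the reason the conjecture has stood open for two decades, is the passage from \emph{known} finite-dimensional motives to \emph{arbitrary} smooth projective $Y$. The assertion that every smooth projective variety has finite-dimensional motive is itself the Kimura--O'Sullivan conjecture, open in essentially all cases beyond the short list. Without either a proof of that conjecture or a genuinely new mechanism for producing tensor-nilpotent representatives of numerically trivial cycles, I see no route to conjecture \ref{voe} in full generality; the motivic approach above delivers it only in the finite-dimensional case.
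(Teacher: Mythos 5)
The statement you were asked about is labelled as a conjecture in the paper, and the paper offers no proof of it -- it only records partial cases (propositions \ref{propvoe}, \ref{propvoe4}, \ref{propvoe6}). You are therefore right to treat it as open and to decline to prove it. However, the central claim of your ``first step'' is false, and the error matters: you assert that Kimura finite--dimensionality of $h(Y)$ implies conjecture \ref{voe} for $Y$. The known implication runs in the opposite direction. As the paper itself notes (citing Andr\'e, Th\'eor\`eme 3.33), conjecture \ref{voe} for all smooth projective varieties implies, and is \emph{strictly stronger than}, the Kimura--O'Sullivan finite--dimensionality conjecture. Kimura's nilpotence theorem \ref{nilp} gives nilpotence of numerically trivial self--correspondences under \emph{composition}; smash--nilpotence is nilpotence under \emph{exterior product}, and the former does not yield the latter. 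What finite--dimensionality does give is lemma \ref{odd}: on an \emph{oddly} finite--dimensional summand all cycles are smash--nilpotent; the evenly finite--dimensional part is exactly where the difficulty lives.

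A concrete counterexample to your reduction: abelian varieties have finite--dimensional motive, yet conjecture \ref{voe} is open for them beyond $0$--cycles, $1$--cycles and threefolds, precisely because of unresolved cases of Beauville's conjecture such as $A^2_{num}(A\times A)\cap A^2_{(0)}(A\times A)=0$ -- see remark \ref{beauvilleconj} and the remark following proposition \ref{propvoe6}. This is also why the paper's partial results are not corollaries of corollary \ref{corfin} alone: they require, in addition to finite--dimensionality, a reduction of $h(F)$ to motives of curves or abelian varieties together with Sebastian's theorems on $1$--cycles and K\"unnemann's hard Lefschetz for the Beauville decomposition, and even then certain degrees ($j=4$ in proposition \ref{propvoe6}, $j=6$ in proposition \ref{propvoe}(\rom2)) remain out of reach. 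Your concluding assessment that the general conjecture is inaccessible is correct, but the purported reduction to finite--dimensionality would, if true, already settle many cases that are in fact open.
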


\begin{remark} It is known that $A^j_{alg}(Y)\subset A^j_\otimes(Y)$ for any $Y$ \cite{Voe}, \cite{V9}. In particular, conjecture \ref{voe} is true for divisors ($j=1$) and for $0$--cycles ($j=\dim Y$).

It is known \cite[Th\'eor\`eme 3.33]{An} that conjecture \ref{voe} for all smooth projective varieties implies (and is strictly stronger than) Kimura's finite--dimensionality conjecture for all smooth projective varieties. For partial results concerning conjecture \ref{voe}, cf. \cite{KS}, \cite{Seb2}, \cite{Seb}, \cite[Theorem 3.17]{V3}, \cite{moismash}.
\end{remark}

\begin{proposition}[Sebastian \cite{Seb}]\label{seb} Let $Y$ be a smooth projective variety of dimension $d$, dominated by a product of curves. Then
  \[ A^{d-1}_{num}(Y)=A^{d-1}_\otimes(Y)\ .\]
  \end{proposition}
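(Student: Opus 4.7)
The plan is to prove the nontrivial inclusion $A^{d-1}_{num}(Y)_\QQ\subseteq A^{d-1}_\otimes(Y)_\QQ$ by reducing to the analogous statement for $1$--cycles on a product of smooth projective curves, which is then handled via the Chow--K\"unneth decomposition.

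For the reduction, let $\phi\colon Z:=C_1\times\cdots\times C_n\to Y$ be a dominant rational map, and let $\pi\colon Z'\to Y$ be a proper surjective morphism from a smooth projective variety $Z'$ obtained by resolving the indeterminacies of $\phi$ (so $Z'$ is birational to $Z$). Fix an ample class $H$ on $Z'$ and, for $\alpha\in A^{d-1}_{num}(Y)_\QQ$, set
\[
\beta:=\pi^\ast\alpha\cdot H^{n-d}\ \in\ A^{n-1}(Z')_\QQ.
\]
The projection formula gives $\pi_\ast\beta=c\cdot\alpha$ where $c>0$ is the degree of $H^{n-d}$ on a general fiber of $\pi$, and $\beta$ is numerically trivial on $Z'$. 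Because proper push--forward preserves smash--nilpotence (via $(f_\ast\gamma)^{\times m}=(f\times\cdots\times f)_\ast(\gamma^{\times m})$), and because the motive of $Z'$ differs from that of $Z$ only by Tate twists of motives of blow--up centers (which contribute at most $0$--cycles to the $1$--cycle analysis, where Voevodsky's conjecture is already known), it suffices to show that every numerically trivial $1$--cycle on $Z=\prod_{i=1}^n C_i$ is smash--nilpotent.

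For this, use the Chow--K\"unneth decompositions $h(C_i)=h^0(C_i)\oplus h^1(C_i)\oplus h^2(C_i)$ to expand
\[
h(Z)=\bigotimes_{i=1}^n h(C_i)=\bigoplus_{(a_1,\ldots,a_n)} h^{a_1}(C_1)\otimes\cdots\otimes h^{a_n}(C_n).
\]
A $1$--cycle class on $Z$ has weight $2(n-1)$, so $\sum a_i=2n-2$; equivalently the deficit $\sum(2-a_i)$ equals $2$. The admissible multi--indices split into two families: (i) exactly one $a_i=0$ with all others equal to $2$; (ii) exactly two indices $a_i=a_j=1$ with all others equal to $2$. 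Family (i) contributes only Tate classes, which vanish on $A^{n-1}_{num}(Z)_\QQ$. Family (ii) yields a summand $h^1(C_i)\otimes h^1(C_j)\otimes\bigotimes_{k\neq i,j} h^2(C_k)$, whose codimension--$(n-1)$ Chow group identifies, after stripping the Tate twists, with a direct summand of $A^1(C_i\times C_j)_\QQ$.

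On the surface $C_i\times C_j$, numerical and algebraic equivalence of divisors coincide (with $\QQ$--coefficients), and algebraically trivial cycles are smash--nilpotent by Voevodsky \cite{Voe}; hence the contribution of each summand in family (ii) is smash--nilpotent. Combined with the vanishing in family (i), this gives the required smash--nilpotence of every numerically trivial $1$--cycle on $Z$, and push--forward along $\pi$ then yields the proposition. The principal difficulty lies in the motivic identification in family (ii): one must check that the Chow--K\"unneth projector onto $h^1(C_i)\otimes h^1(C_j)\otimes \bigotimes_{k\neq i,j} h^2(C_k)$ realizes this summand as (a direct factor of) $A^1(C_i\times C_j)_\QQ$ in a manner compatible with numerical equivalence. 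Once that motivic unwinding is granted, the remainder of the argument is a routine invocation of the known divisor case of Voevodsky's conjecture.
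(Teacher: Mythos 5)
The paper offers no proof of this proposition: it is quoted verbatim as \cite[Theorem 6]{Seb}. Your attempt to actually prove it contains a genuine gap at its central step, and the gap sits exactly where the real difficulty of Sebastian's theorem lies.

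The reduction from $Y$ to the product of curves $Z$ (pulling back, cutting with $H^{n-d}$, pushing forward, and discarding the blow--up summands, which only contribute $0$--cycles) is sound. The problem is the sentence ``A $1$--cycle class on $Z$ has weight $2(n-1)$, so $\sum a_i=2n-2$.'' That is a statement about the cohomology class, not about the Chow group. The projectors $\pi_{a_1}\otimes\cdots\otimes\pi_{a_n}$ sum to the diagonal, so
\[
A^{n-1}(Z)=\bigoplus_{(a_1,\ldots,a_n)}(\pi_{a_1}\otimes\cdots\otimes\pi_{a_n})_\ast A^{n-1}(Z)
\]
runs over \emph{all} multi--indices, and the summands with $\sum a_i<2n-2$ need not vanish: they act as zero on $H^{2n-2}(Z)$ but not on $A^{n-1}(Z)$. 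These are precisely the homologically (hence numerically) trivial pieces of Bloch--Beilinson level $\ge 1$. For instance, for $n=3$ the summand $A^2\bigl(h^1(C_1)\otimes h^1(C_2)\otimes h^1(C_3)\bigr)$ contains a piece of the Griffiths group of $C_1\times C_2\times C_3$, which is nonzero in general (Ceresa--type cycles). So your families (i) and (ii) exhaust only the weight--$(2n-2)$ part of $A^{n-1}_{num}(Z)$ --- which is indeed disposed of by the divisor case of Voevodsky's conjecture on a surface --- but they miss all the cycles that make the theorem nontrivial.

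Handling the missing summands is the actual content of \cite{Seb}. A summand $\bigl(\bigotimes_{i\in S}h^1(C_i)\bigr)(t)$ with $|S|$ odd is an oddly finite--dimensional motive, so every cycle on it is smash--nilpotent by Kimura's result (this is lemma \ref{odd} of the present paper); but for $|S|$ even and $\ge 4$ one lands, e.g., in $A^3\bigl(h^1(C_1)\otimes\cdots\otimes h^1(C_4)\bigr)$, a Beauville--type piece of $1$--cycles on a product of four curves (equivalently on an abelian fourfold), and there no soft argument is available: Sebastian needs his genuinely new smash--nilpotence results on abelian varieties. Your proposal as written silently sets all of these summands to zero, so it does not establish the proposition.
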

  
  \begin{proof} This is \cite[Theorem 6]{Seb}. 
  \end{proof}

We now prove the main result of this subsection:

\begin{proposition}\label{propvoe} Let $F$ be the Fano variety of lines on a smooth cubic fivefold.

\noindent
(\rom1) \[ A^j_{num}(F)=A^j_\otimes(F)\ \ \ \hbox{for\ all\ }j\ .\]

\noindent
(\rom2) \[ A^j_{num}(F\times F)=A^j_\otimes(F\times F)\ \ \ \hbox{for\ all\ }j\not=6\ .\]

\noindent
(\rom3) \[ A^j_{num}(F^3)=A^j_\otimes(F^3)\ \ \ \hbox{for\ }j\le 3\ \hbox{and\ for\ }j\ge 15\ .\]

\end{proposition}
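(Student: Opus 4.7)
The plan is to exploit Theorem~\ref{main5}, which realizes $h(F)$ as a direct summand (in $\MM_{\rm rat}$) of the motive
\[ M := h^2(C\times C)(2) \oplus \bigoplus_{i=0}^5 \bigl( h^1(C)\oplus h^1(C)\bigr)(i) \oplus \bigoplus_{i=1}^4 h^1(C)(i) \oplus \bigoplus_{k} \LL(m_k), \]
for a certain finite set of Lefschetz twists $\{m_k\}$. Since both $A^\ast_\otimes$ and $A^\ast_{num}$ are stable under the action of correspondences, Voevodsky's conjecture on $F$, $F\times F$, $F^3$ reduces to verifying the equality $A^j_{num}=A^j_\otimes$ on each direct summand of $M$, $M\otimes M$, $M^{\otimes 3}$ respectively.

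I would next record an auxiliary fact: Voevodsky's conjecture is known on $C^l$ in codimension $j^\prime$ whenever $j^\prime\in\{0,1,l-1,l\}$, by combining the remark after Conjecture~\ref{voe} (divisors and $0$--cycles) with Sebastian's Proposition~\ref{seb} (codimension $d-1$). In particular the conjecture holds in \emph{every} codimension on $C^l$ for $l\le 3$, while for $l\in\{4,5,6\}$ the unresolved codimensions are respectively $j^\prime=2$, $j^\prime\in\{2,3\}$ and $j^\prime\in\{2,3,4\}$.

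Part (\rom1) is then immediate, since every summand of $M$ is a summand of $h(C^l)(n)$ with $l\le 2$, so the auxiliary fact applies in every codimension. For part (\rom2), the only summand of $M\otimes M$ reaching $l=4$ is the ``$A\otimes A$'' term $h^2(C\times C)^{\otimes 2}(4)\hookrightarrow h(C^4)(4)$; within it, cycles sit in codimension $j^\prime\ge 2$ on $C^4$, since each factor $h^2(C^2)$ contributes codimension $\ge 1$. The lone unresolved codimension $j^\prime=2$ on $C^4$ corresponds to $j=2+4=6$ on $F\times F$; for every $j\ne 6$ all contributions to $A^j(F\times F)$ lie in ranges covered by the auxiliary fact.

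Part (\rom3) requires a finer bookkeeping. The potentially problematic summands of $M^{\otimes 3}$ are precisely those with at least two tensor factors of type $h^2(C\times C)(2)$: the triple product $(h^2(C^2)(2))^{\otimes 3}\hookrightarrow h(C^6)(6)$ contributes (unresolved) at $j\in\{9,10\}$ from $j^\prime\in\{3,4\}$ on $C^6$; the summands $(h^2(C^2)(2))^{\otimes 2}\otimes h^1(C)(i)\hookrightarrow h(C^5)(4+i)$ with $i\in\{0,\ldots,5\}$ contribute at $j=7+i$ from the unresolved codimension $j^\prime=3$ on $C^5$; and the summands $(h^2(C^2)(2))^{\otimes 2}\otimes\LL(m_k)\hookrightarrow h(C^4)(4+m_k)$ contribute at $j=6+m_k$ from the unresolved $j^\prime=2$ on $C^4$. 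The main obstacle is to control the twists $m_k$ appearing in $M$: tracing through the construction in the proof of Theorem~\ref{main5} shows $m_k\in\{-2,-1,\ldots,8\}$, so the union of all unresolved $j$--values is contained in $\{4,5,\ldots,14\}$. Consequently, for $j\le 3$ and for $j\ge 15$ every contribution to $A^j(F^3)$ falls in a codimension range covered by the auxiliary fact, yielding Voevodsky's conjecture.
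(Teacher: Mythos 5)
Your proposal follows essentially the same route as the paper: reduce via Theorem~\ref{main5} and correspondence--stability of $A^\ast_{num}$ and $A^\ast_\otimes$ to summands of $h(C^l)(\ast)$, then invoke the known cases of Voevodsky's conjecture (divisors, $0$--cycles, and Sebastian's codimension $d-1$ result) on products of curves. The only real difference is cosmetic: the paper collects the odd--degree summands into an oddly finite--dimensional motive $M_{odd}$ and kills them with Kimura's \cite[Proposition 6.1]{Kim}, whereas you treat them as summands of $h(C^l)$ for small $l$; both work here (the paper's device gives slightly more for the $l=5$ pieces, but that is not needed for the stated ranges). There is, however, one slip in your part (\rom3): the problematic summands of $M^{\otimes 3}$ are \emph{not} precisely those with at least two factors of type $h^2(C\times C)(2)$. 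A summand $h^2(C\times C)(2)\otimes h^1(C)(i)\otimes h^1(C)(i')$ has only one such factor yet sits inside $h(C^4)(2+i+i')$, so it too meets the unresolved codimension $j'=2$ on $C^4$; your ``precisely'' claim and the ensuing list omit this case. Fortunately it contributes unresolved values only at $j=4+i+i'\in\{4,\dots,14\}$, which lies inside the range you already exclude, so the conclusion for $j\le 3$ and $j\ge 15$ stands once this case is added to the bookkeeping. (A smaller point: your assertions that cycles in $(h^2(C^2))^{\otimes k}$ sit in codimension $\ge k$ because ``each factor contributes codimension $\ge 1$'' are not justified --- Chow groups of a tensor product do not decompose factorwise --- but they are also never needed, since the codimensions you thereby discard all land in the excluded interval anyway.)
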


\begin{proof}
Part (\rom1) is immediate: $A^\ast(F)$ factors over the Chow groups of a surface, and conjecture \ref{voe} is known for surfaces.

\noindent(\rom2)
As we have seen, there is an inclusion
  \[ h(F)\ \subset\ h^2(C\times C)(2)\oplus \bigoplus_{i=0}^5 \bigl( h^1(C)\oplus h^1(C)\bigr)(i)\oplus \bigoplus_{i=1}^4 h^1(C)(i)\oplus\bigoplus_{n_j\ge 0} \LL(n_j)\ \ \ \hbox{in}\ \MM_{\rm rat}\ .\]
  
  It follows there is also an inclusion
  \[ h(F\times F)\ \subset\ h^4(C^4)(4)\oplus \bigoplus h^2(C\times C)(m_i)\oplus \bigoplus \LL(n_i)\oplus M_{odd}\ \ \ \hbox{in}\ \MM_{\rm rat}\ ,\]
  where $M_{odd}$ is an oddly finite--dimensional motive, in the sense of \cite{Kim}.
  This implies there is a correspondence $\Gamma$ giving a split injection of Chow groups
  \[ \Gamma_\ast\colon\ \ A^j_{num}(F\times F)\ \to\ A^{j-4}_{num}(C^4)\oplus \bigoplus A^{j-m_i}_{num}(C\times C)\oplus A^\ast(M_{odd})\ .\]
 The left--inverse to $\Gamma_\ast$ is again induced by a correspondence. The group $A^\ast_{num}(C\times C)$ consists of smash--nilpotent cycles (this is true for any surface). The group $A^\ast(M_{odd})$ consists of smash--nilpotent cycles (lemma \ref{odd} below). The group $A^{j-4}_{num}(C^4)$ consists of smash--nilpotent cycles for $j\not= 6$, in view of proposition \ref{seb}. This implies that
   \[ \Gamma_\ast A^j_{num}(F\times F)\ \subset\ A^\ast_\otimes ()\ \ \ \hbox{for\ all\ }j\not= 6\ .\]
   Since the left--inverse to $\Gamma_\ast$ is induced by a correspondence, it preserves smash--nilpotence and thus
   \[    A^j_{num}(F\times F)\ \subset\ A^\ast_\otimes (F\times F)\ \ \ \hbox{for\ all\ }j\not= 6\ .\]
    
    \begin{lemma}[Kimura \cite{Kim}]\label{odd} Suppose $M\in\MM_{\rm rat}$ is oddly finite--dimensional. Then
       \[ A^\ast_{}(M)\ \subset\ A^\ast_\otimes(M)\ .\]
     \end{lemma}
   
   \begin{proof} This is \cite[Proposition 6.1]{Kim}.
   \end{proof}

\noindent
(\rom3)
As above, we find there is an inclusion of motives
  \[ h(F^3)\ \subset\ h^6(C^6)(6)\oplus \bigoplus_{2\le m_i\le 12} h^4(C^4)(m_i)\oplus \bigoplus h^2(C^2)(n_i)\oplus \bigoplus \LL(p_i)\oplus M_{odd}\ \ \ \hbox{in}\ \MM_{\rm rat}\ ,\]
  with $M_{odd}$ oddly finite--dimensional.
  This implies there is a correspondence $\Gamma$ inducing a split injection of Chow groups
   \[ \Gamma_\ast\colon\ \ A^j_{num}(F^3)\ \to\ A^{j-6}_{num}(C^6)\oplus \bigoplus_{2\le m_i\le 12} A^{j-m_i}_{num}(C^4)\oplus \bigoplus A^\ast_{num}(C^2)\oplus A^\ast(M_{odd})\ .\]
  As above, the last two terms consist of smash--nilpotent cycles. The first and second term are smash--nilpotent provided $j\le 3$ or $j\ge 15$ (for $j=15$ we use again proposition \ref{seb}). We conclude using the fact that the left--inverse to $\Gamma_\ast$ is given by a correspondence.

\end{proof}

\subsection{Murre's conjectures}
\label{secmur}

\begin{conjecture}[Murre \cite{Mur}]\label{conjmurre} Let $X$ be a smooth projective variety of dimension $n$.

\noindent
(A) There exists a Chow--K\"unneth decomposition for $X$, i.e. a mutually orthogonal set of idempotents $\Pi_i\in A^n(X\times X)$ summing to the diagonal and such that $(\Pi_i)_\ast H^\ast(X)=H^i(X)$ for all $i$.

\noindent
(B) $(\Pi_i)_\ast A^j(X)=0$ for $i<j$ and for $i>2j$.

\noindent
(C) The filtration
   \[  F^r A^j(X):= \ker (\Pi_{2j})\cap \ker (\Pi_{2j-1})\cap\cdots\cap \ker (\Pi_{2j-r+1}) \ \ \subset\ A^j(X)\]
   is independent of the choice of the $\Pi_i$.
   
 \noindent
 (D) $F^1 A^j(X)=A^j_{hom}(X)$.
 \end{conjecture}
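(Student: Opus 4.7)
The final statement is Murre's conjecture in its full generality for an arbitrary smooth projective variety $X$, which is a famous open problem; no complete proof is known in this generality. Any proposal must therefore be a research strategy whose individual steps are themselves deep conjectures. I note that the paper itself (proposition \ref{propmurre}) only establishes the conjecture for the specific Fano varieties of cubic fivefolds, where the motive is tightly controlled by a curve.

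For part (A), the natural approach is to lift the cohomological K\"unneth components of the diagonal to rational equivalence. Under the assumption that $X$ has finite--dimensional motive, one can invoke the nilpotence theorem (theorem \ref{nilp}) together with the lifting argument of Jannsen \cite[Corollary 3.9]{J4}: the K\"unneth projectors, which exist in $\MM_{\rm num}$ thanks to the semi--simplicity of numerical motives, can be lifted modulo the nilpotent ideal $\ker(\MM_{\rm rat}\to\MM_{\rm num})$ to mutually orthogonal idempotents in $A^n(X\times X)$ summing to the diagonal. In the generality of the conjecture, however, neither finite--dimensionality nor the standard conjecture of K\"unneth type is available.

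For part (B), once a Chow--K\"unneth decomposition exists, one would attempt to refine it so that each $\Pi_i$ factors through a subvariety whose dimension matches the (co)niveau of $H^i(X)$, following Vial's refined decompositions \cite{V4}. The two inequalities in (B) are not symmetric: the range $i>2j$ can often be reduced to the range $i<j$ by taking transposes $\Pi_i^t$, whereas $i<j$ is essentially a Bloch--Beilinson vanishing and is the hard direction. Parts (C) and (D) would then follow by showing that any two Chow--K\"unneth decompositions are conjugate modulo correspondences preserving the filtration $F^\bullet$, and by identifying $F^1A^j(X)$ with $A^j_{hom}(X)$ via the action of $\Pi_{2j}$ on cycle classes.

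The main obstacle is that (A)--(D) taken together are essentially equivalent to the existence of a Bloch--Beilinson filtration on Chow groups (cf. \cite{J}), one of the deepest open problems in the theory of algebraic cycles. Consequently, a proof in the generality of the statement as worded is beyond current techniques: every step hinges on a presently open conjecture, and all known progress is restricted to varieties whose motive admits additional structural control, such as finite--dimensional motives of low niveau or motives dominated by products of curves and surfaces.
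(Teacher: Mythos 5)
You are right that this statement is a conjecture, not a theorem: the paper offers no proof of it, stating it only as background (attributed to Murre \cite{Mur}) and then establishing special cases for the Fano variety $F$ of a cubic fivefold in proposition \ref{propmurre}, exactly as you observe. Your survey of the standard strategy (lifting K\"unneth projectors under finite--dimensionality, reducing $i>2j$ to $i<j$ by transposition, and the link to the Bloch--Beilinson filtration via \cite{J}) is accurate and consistent with how the paper actually deploys the conjecture, so there is nothing to correct.
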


\begin{proposition}\label{propmurre} Let $F$ be the Fano variety of lines on a smooth cubic fivefold.

\noindent
(\rom1) Murre's conjectures (A), (B), (C) and (D) are true for $F$.

\noindent
(\rom2) Murre's conjectures (A) and (B) are true for $F\times F$.
\end{proposition}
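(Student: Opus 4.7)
The plan is to build on the refined Chow--K\"unneth decomposition of Proposition \ref{CK} together with Corollary \ref{Chow}, following Vial's framework \cite{V4}.

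For part (\rom1), conjecture (A) is exactly the content of Proposition \ref{CK}. For (B), the key point is dimensional: each projector $\Pi_i$ factors through a variety $V_i$ of dimension at most $2$ (a curve $C_i$ for $i$ odd, a surface $S$ for $i=6$, and a $0$--dimensional variety for the remaining even $i\not=6$). Writing $(\Pi_i)_\ast\colon A^j(F)\to A^j(F)$ through this factorization, the intermediate Chow group $A^m(V_i)$ vanishes outside $0\le m\le\dim V_i$, which immediately forces $(\Pi_i)_\ast=0$ for most pairs $(i,j)$ with $i<j$ or $i>2j$. The remaining boundary cases are handled by combining the cohomological action of $\Pi_i$ (projection onto $H^i(F)$) with Corollary \ref{Chow}, which gives $A^j_{AJ}(F)=0$ for $j\not=4$. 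For (C) and (D), the refined projectors $\Pi_{i,j}$ of \cite[Theorem 2]{V4} act on cohomology as projection onto $\gr^j_N H^i(F)$ and induce a filtration on $A^\ast(F)$ which coincides with Murre's $F^\ast$; the equality $F^1A^j(F)=A^j_{hom}(F)$ then follows by a Bloch--Srinivas argument using that $H^\ast(F)$ is motivated by a surface (Theorem \ref{main5}).

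For part (\rom2), the product decomposition $\Pi^{F\times F}_k:=\sum_{i+j=k}\Pi_i\otimes\Pi_j$ yields a Chow--K\"unneth decomposition of $F\times F$, verifying (A). For (B), each tensor product $\Pi_i\otimes\Pi_j$ factors through a product $V_i\times V_j$ of total dimension at most $4$, the maximum being attained only by $\Pi_6\otimes\Pi_6$, which factors through $S\times S$. Repeating the dimensional analysis of part (\rom1) in this product setting, together with the known Murre structure on products of curves and on $S\times S$, then yields vanishing of $(\Pi^{F\times F}_k)_\ast$ on $A^l(F\times F)$ in the range $k<l$ or $k>2l$.

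The main obstacle will be the case-by-case bookkeeping for (B), especially the $\Pi_6\otimes\Pi_6$ contribution on $F\times F$: this term factors through the $4$--dimensional $S\times S$, so pure dimension counting does not suffice and one must additionally invoke the Chow--K\"unneth structure of the auxiliary surface $S$ and of $S\times S$. Once the refined motivic summands of Theorem \ref{main5} are tracked through the tensor product, the rest of the verification is a routine unpacking.
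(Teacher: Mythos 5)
Your strategy is workable in outline, but two steps have genuine gaps as written. First, conjecture (C): exhibiting one particular (refined) Chow--K\"unneth decomposition and the filtration it induces says nothing about the \emph{independence of the filtration from the choice of projectors}, which is exactly what (C) asserts; your phrase ``induce a filtration which coincides with Murre's $F^\ast$'' presupposes that this filtration is well defined, i.e.\ it assumes (C). The standard (and the paper's) way out is to deduce (C) for $F$ from conjectures (A) and (B) for $F\times F$ via \cite[Proposition 2.8]{V3} --- this is the reason part (\rom2) is proved before part (\rom1) is completed. Second, the $\Pi_6\otimes\Pi_6$ term in part (\rom2): you correctly isolate it as the hard case, but then appeal to ``the known Murre structure on $S\times S$''. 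Murre's conjecture (B) for a product of two \emph{arbitrary} surfaces is not known, so this appeal is unjustified unless you observe that, by theorem \ref{main5}, the surface governing $h(F)$ may be taken to be $C\times C$ for a single curve $C$; then $S\times S=C^4$ is a product of four curves, where (B) is precisely proposition \ref{le4} (Murre, Xu--Xu, Vial). Without that identification the step fails. (The boundary cases of your dimension count for (B) on $F$ itself --- e.g.\ $(\Pi_i)_\ast$ on $A^j(F)$ for small odd $i$ and large $j$ --- are also only sketched; they do not follow from dimension reasons alone.)

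For comparison, the paper's proof sidesteps all of this bookkeeping: from theorem \ref{main5} it extracts inclusions of motives $h(F)\subset\bigoplus h(C^r)(m_j)$ with $r\le 2$ and $h(F\times F)\subset h(C^4)(4)\oplus\bigoplus_{r\le 3}h(C^r)(m_j)$, invokes proposition \ref{le4} for the right--hand sides, and uses that conjectures (B) and (D) pass to direct summands of motives (\cite[Proposition 2.7]{V3}); conjecture (C) for $F$ is then obtained from part (\rom2) by \cite[Proposition 2.8]{V3}. If you repair the two points above --- in particular, replace the case--by--case analysis by the hereditary property of (B) under direct summands, and derive (C) from (\rom2) rather than from the refined decomposition --- your argument collapses onto the paper's.
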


\begin{proof} It will be convenient to extend Murre's conjectures to motives; this has been done in \cite[Section 2]{V3}. We will also rely on the following result:

\begin{proposition}[Murre, Xu and Xu, Vial]\label{le4} 
Murre's conjectures (B) and (D) are true for the product of at most $3$ curves. Murre's conjecture (B) is true for the product of $4$ curves.
\end{proposition}

\begin{proof} The first statement is proven in \cite[Part II]{Mur}. The second statement follows from the first by applying \cite[Theorem 3.3]{XuXu} (or alternatively \cite[Theorem 2.6]{V3}).
\end{proof}

We first prove (\rom2) of proposition \ref{propmurre}. As we have seen, there is an inclusion of motives
  \[  h(F\times F)\ \subset\ h(C^4)(4) \oplus \bigoplus_{r\le 3} h(C^r)(m_j)\ \ \ \hbox{in}\ \MM_{\rm rat}\ .\]
The right--hand side verifies conjecture (B) (proposition \ref{le4}). Since any direct summand of a motive verifying (B) also verifies (B) \cite[Proposition 2.7]{V3}, it follows that $F\times F$ verifies (B).

We now prove proposition \ref{propmurre}(\rom1). That (B) and (D) hold for $F$ follows as above. It remains to prove (C); this follows from (\rom2) by applying \cite[Proposition 2.8]{V3}. 
\end{proof}

\section{Some cubic fourfolds}

In this section, we consider Voevodsky's conjecture (conjecture \ref{voe}) for Fano varieties of some special cubic fourfolds (known to have finite--dimensional motive).

 \begin{proposition}\label{propvoe4} Let $X$ be a cubic fourfold defined by an equation
   \[   f(x_0,\ldots,x_3)+x_4^3+x_5^3=0\ ,\]
   where $f(x_0,\ldots,x_3)$ defines a smooth cubic surface.
  Let $F$ be the Fano variety of lines on $X$ (so $F$ is smooth of dimension $4$). Then
    \[ A^3_{num}(F)= A^3_{\otimes}(F)\ .\]
  \end{proposition}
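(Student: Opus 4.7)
The plan is to follow the template of proposition \ref{propvoe}(\rom2) in the cubic fourfold setting, with the Fermat elliptic curve $E_0=\{y_0^3+y_1^3+y_2^3=0\}$ playing the role of Lewis' curve. The goal is to exhibit $h(F)$ as a direct summand, in $\MM_{\rm rat}$, of a motive built from Tate twists, $h^1(E_0)$, $h^2(E_0\times E_0)$, and an oddly finite-dimensional piece; one then invokes known cases of Voevodsky's conjecture on each summand.

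First, I would obtain a decomposition for $h(X)$. The diagonal structure of the defining polynomial makes $X$ into an isotrivial elliptic fibration over $\PP^3$, obtained by projecting $X\subset\PP^5$ from the line $L=\{x_0=\cdots=x_3=0\}$: the fibre over $p\in\PP^3\setminus S$ is the plane cubic $\{f(p)t^3+x_4^3+x_5^3=0\}$, isomorphic to $E_0$, and the three points $(0\!:\!1\!:\!-\zeta)$ with $\zeta^3=1$ yield three disjoint sections. A Shioda--Katsura-type construction then produces a correspondence giving an inclusion
\[ h(X)\ \subset\ h(S)\otimes h^1(E_0)(-1)\oplus\bigoplus_k\mathbb{L}(m_k)\ \ \ \hbox{in}\ \MM_{\rm rat}\ . \]
Since $S$ is a rational cubic surface, $h(S)$ is a direct sum of Tate twists, so $h(X)$ is in fact a direct summand of $\bigoplus_i h^1(E_0)(a_i)\oplus\bigoplus_k\mathbb{L}(m_k)$; in particular $X$ has finite-dimensional motive.

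Next, applying theorem \ref{main} and blowing up the diagonal of $X\times X$ (as in the proof of theorem \ref{main5}) gives $h(F)\subset h(X\times X)(-2)\oplus\bigoplus_{j=1}^{3}h(X)(j-2)$. Substituting the decomposition of $h(X)$ above and using that $h^1(E_0)\otimes h^1(E_0)$ is a direct summand of $h^2(E_0\times E_0)$, I obtain an inclusion
\[ h(F)\ \subset\ \bigoplus h^2(E_0\times E_0)(b_i)\oplus\bigoplus h^1(E_0)(c_i)\oplus\bigoplus\mathbb{L}(d_i)\oplus M_{\rm odd}\ \ \ \hbox{in}\ \MM_{\rm rat}\ , \]
with $M_{\rm odd}$ oddly finite-dimensional (arising from summands containing an odd number of $h^1(E_0)$ factors). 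The induced split injection on Chow groups then reads
\[ \Gamma_*\colon\ \ A^3_{num}(F)\ \hookrightarrow\ \bigoplus A^{?}_{num}(E_0\times E_0)\oplus\bigoplus A^{?}_{num}(E_0)\oplus A^*(M_{\rm odd})\ . \]
Each summand on the right consists of smash-nilpotent cycles: $A^*(M_{\rm odd})$ by lemma \ref{odd}; the $E_0$-terms trivially (Voevodsky's conjecture is elementary for curves); and on the surface $E_0\times E_0$, the codimensions $3-b_i$ that actually arise (after tracking Tate shifts) sit in the divisor range, covered by Voevodsky's theorem $A^1_{alg}=A^1_{num}\subset A^1_\otimes$. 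Since the correspondence left-inverse to $\Gamma_*$ also preserves smash-nilpotence, this gives $A^3_{num}(F)\subset A^3_\otimes(F)$.

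The main obstacle is the first step: constructing the Shioda--Katsura correspondence realising the motivic decomposition of $h(X)$ in terms of $h(S)$ and $h^1(E_0)$. The remainder is routine bookkeeping of Tate shifts needed to check that the codimensions arising in the final split injection all fall in the divisor range on $E_0\times E_0$, where Voevodsky's conjecture is known.
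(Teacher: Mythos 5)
Your argument breaks down at the very first step, and for a reason of pure Hodge theory. You claim an inclusion $h(X)\subset h(S)\otimes h^1(E_0)(-1)\oplus\bigoplus_k\LL(m_k)$ and deduce that $h(X)$ is a direct summand of $\bigoplus_i h^1(E_0)(a_i)\oplus\bigoplus_k\LL(m_k)$. This is impossible: the realization of $h^1(E_0)(a_i)$ lives in odd cohomological degree, and the Tate summands contribute only Hodge classes, whereas $H^4(X,\QQ)_{prim}$ is a $22$--dimensional weight--$4$ Hodge structure with $h^{3,1}(X)=1\neq 0$. No motive with only odd--degree and Tate--type even--degree realization can contain $h(X)$. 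The error comes from treating the isotrivial elliptic fibration $X\dashrightarrow\PP^3$ as if the local system $R^1\pi_\ast\QQ$ were trivial; the monodromy around the discriminant $S=\{f=0\}$ acts on $H^1(E_0)$ through an order--$3$ automorphism, and it is precisely the associated $\mu_3$--cover --- the cubic \emph{threefold} $X_1=\{f(x_0,\ldots,x_3)+z^3=0\}\subset\PP^4$ --- that carries the transcendental part of $H^4(X)$. The correct Shioda--Katsura output (compare the proof of proposition \ref{ab6}, where exactly this mechanism is used for the sixfold) is a split inclusion of $h(X)$ into $h^3(X_1)\otimes h^1(E_0)(\ast)$ plus Tate twists; using $A^\ast_{AJ}(X_1)=0$ and \cite{ACV} one replaces $h^3(X_1)$ by $h^1(J_1)(1)$ for the $5$--dimensional intermediate Jacobian $J_1$, landing in $h^2(A)(\ast)$ for the abelian variety $A=J_1\times E_0$ of dimension $6$ --- not in motives of curves. (The paper's own proof reaches the same kind of statement, proposition \ref{ab}, by a different route: van Geemen--Izadi's Kuga--Satake correspondence into $H^5(Z)\otimes H^1(E)$ for a cubic fivefold $Z$, then \cite{ACV}, giving a $22$--dimensional abelian variety.)

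This error propagates into your endgame. Once the even--degree transcendental part is correctly accounted for, $h(F)$ acquires a summand inside $h^4(A\times A)(\ast)$ for an abelian variety $A$ of dimension $g\geq 6$, and the relevant piece of $A^3_{num}(F)$ maps to $(\Pi_4^{A\times A})_\ast A^3(A\times A)=A^3_{(2)}(A\times A)$, which via K\"unnemann's hard Lefschetz is isomorphic to $A^{2g-1}_{(2)}(A\times A)$, i.e.\ a group of \emph{one--cycles} on a high--dimensional abelian variety. Your claim that everything falls "in the divisor range" where Voevodsky's conjecture is elementary is therefore not tenable; one genuinely needs Sebastian's theorem that numerically trivial $1$--cycles on abelian varieties are smash--nilpotent \cite{Seb2} (this is exactly the nontrivial input the paper invokes). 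The overall template --- embed $h(F)$ into a motive of abelian type and quote known cases of Voevodsky's conjecture there --- is the right one and matches the paper, but as written both the decomposition of $h(X)$ and the final reduction are incorrect.
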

  
  \begin{proof} We know $X$ and $F$ have finite--dimensional motive \cite[corollary 12(\rom3)]{moi}.  
   To prove the proposition, we reduce to abelian varieties, where it is known that smash--equivalence and numerical equivalence coincide for $1$--cycles. For an abelian variety $A$, we will write $\Pi_j^A$ for the Chow--K\"unneth decomposition \cite{Sch}, \cite{DM}. We will write $h^j(A)$ for the motive $(A,\Pi_j^A,0)$.
  
  We first prove the following statement, which may be of independent interest:
   
   \begin{proposition}\label{ab} Let $F$ be as in proposition \ref{propvoe4}. There exists an abelian variety $A$ (of dimension $22$) such that
     \[ h(F)\ \subset\ h^4(A\times A)\oplus \bigoplus_j h^2(A)(m_j)\oplus \bigoplus_j \LL(n_j)\ \ \ \hbox{in}\ \MM_{\rm rat} \]
     (i.e., $h(F)$ is isomorphic to a direct summand of the right--hand side).
     \end{proposition}
     
   \begin{proof} For a cubic fourfold $X$ as in proposition \ref{propvoe4}, van Geemen and Izadi \cite[Corollary 5.3]{GI} have constructed a Kuga--Satake correspondence, i.e. a correspondence $\Gamma_{KS}$ inducing an injection
      \[  (\Gamma_{KS})_\ast\colon\ \ H^4(X,\QQ)_{prim}\ \to\ H^5(Z,\QQ)\otimes H^1(E,\QQ)\ ,\]
      where $Z$ is a certain smooth cubic fivefold and $E$ is an elliptic curve.
      
      Smooth cubic fivefolds $Z$ are known to have $A^\ast_{AJ}(Z)=0$ (\cite{Lew} or \cite{Otw} or \cite{HI}), whence $H^5(Z,\QQ)=N^2 H^5(Z,\QQ)$ (where $N^\ast$ is the coniveau filtration).  
          Now, \cite[Theorem 1]{ACV} furnishes an abelian variety $J$ (of dimension $h^{2,3}(Z)=21$ ) and a correspondence $\Lambda^\prime$ on $J\times Z$ inducing an isomorphism
  \[  (\Lambda^\prime)_\ast\colon\ \ H^1(J)\ \xrightarrow{\cong}\ H^5(Z)\ .\]
  The correspondence $\Lambda^\prime$ induces an isomorphism 
  \[  \Lambda^\prime\colon\ \ h^1(J)\ \xrightarrow{\cong}\ h^5(Z)\ \ \hbox{in}\ \MM_{\rm num}\ ,\]
  hence there also exists a correspondence $\Lambda$ on $Z\times J$ inducing the inverse isomorphism
  \[ \Lambda \colon\ \ h^5(Z)\ \xrightarrow{\cong}\ h^1(J)\ \ \hbox{in}\ \MM_{\rm num}\ .\]
  Composing correspondences, one obtains an injection
  \[  \bigl((\Lambda\times\Delta_E)\circ \Gamma_{KS}\bigr){}_\ast\colon\ \ 
       H^4(X,\QQ)_{prim}\ \to\ H^2(J\times E,\QQ)\ .\]
   We now write $A:=J\times E$.   
 Since $H^\ast(X,\QQ)$ consists of $H^4(X,\QQ)_{prim}$ plus algebraic cohomology, the correspondence $(\Lambda\times\Delta_E)\circ \Gamma_{KS}$
 induces a homomorphism of motives
  \[ f\colon\ \ h(X)\ \to\ h^2(A)(1)\oplus \bigoplus_j \LL(m_j)\ \ \ \hbox{in}\ \MM_{\rm rat}\ .\]
 Applying lemma \ref{split} to $f$, we find a splitting 
   \[ h(X)= h(X)_1 \oplus h(X)_2 \ \ \ \hbox{in}\ \MM_{\rm rat}\ ,\]
   such that $f$ restricted to $h(X)_2$ is numerically trivial, and $f$ restricted to $h(X)_1$ has a left--inverse.
 Using the fact that $X$ and $A$ verify the Lefschetz standard conjecture (and so homological and numerical equivalence coincide on $X\times A$), we find that $f$ restricted to $h(X)_2$ is homologically trivial. On the other hand, $f_\ast$ is injective on cohomology and so $H^\ast(h(X)_2)=0$. By finite--dimensionality, $h(X)_2=0$ in $\MM_{\rm rat}$
 and so $f$ 
 has a left--inverse, i.e. $h(X)$ is isomorphic to a direct summand of the right--hand side.

  Composing with the result obtained in theorem \ref{main}, this implies
  \[ h(F)\ \subset\   h(X\times X) (-2)\oplus \bigoplus_{j} h(X)(j)\ \subset\  h^4(A\times A)\oplus \bigoplus_j h^2(A)(m_j) \oplus \bigoplus_j \LL(n_j) \ \ \ \hbox{in}\ \MM_{\rm rat}\ ,\]
  and so proposition \ref{ab} is proven.
   \end{proof}

  We now wrap up the proof of proposition \ref{propvoe}. It follows from proposition \ref{ab} (after taking Chow groups) there is a correspondence $\Gamma$ inducing a split injection
    \[ \Gamma_\ast\colon\ \ A^3_{num}(F)\ \to\ (\Pi_4^{A\times A})_\ast A^3(A\times A)\oplus \bigoplus_j (\Pi_2^{A})_\ast A^{n_j}_{num}(A)\ ,\] 
   and the left inverse to $\Gamma_\ast$ is again induced by a correspondence. 
    This implies there is a split injection
    \[ \Gamma_\ast\colon\ \ A^3_{num}(F)\ \to\  A^3_{(2)}(A\times A)\oplus \bigoplus_j  A^{2}_{(2)}(A)\ ,\] 
    where $A^i_{(j)}$ denotes the Beauville filtration \cite{Beau}. (Indeed, one has $(\Pi_2^{A})_\ast A^{1}_{num}(A)=0$ and     $(\Pi_2^{A})_\ast A^{i}_{}(A)=A^i_{(2i-2)}(A)=0$ for $i\ge 3$ \cite{Beau}.)    
   Composing with some Lefschetz operators, we obtain split injections
   \[  (\Gamma^\prime)_\ast\colon\ \ A^3_{num}(F)\ \to\  A^3_{(2)}(A\times A)\oplus \bigoplus_j  A^{2}_{(2)}(A)\ \xrightarrow{\cong}\  A^{2g-1}_{(2)}(A\times A)\oplus \bigoplus_j A^{g}_{(2)}(A)   \ ,\]   
   where $g$ denotes the dimension of $A$. The second arrow is an isomorphism thanks to K\"unnemann's hard Lefschetz theorem \cite{Kun}.  
   
   As Voevodsky's conjecture is true for $0$--cycles and for $1$--cycles on abelian varieties \cite{Seb2}, we know that
   \[ (\Gamma^\prime)_\ast A^3_{num}(F) \]
   consists of smash--nilpotent cycles. Since the left--inverse to $(\Gamma^\prime)_\ast$ is induced by a correspondence, it follows that $A^3_{num}(F)$ consists of smash--nilpotent cycles.
     \end{proof}

   \begin{remark}\label{beauvilleconj} Let $X$ and $F$ be as in proposition \ref{propvoe4}. The above argument does {\em not\/} allow to prove that $A^2_{num}(F)=A^2_\otimes(F)$. The problem is as follows: to prove this along the lines of the above proof, one would need to know that 
    \[A^2_{num}(A\times A)\cap A^2_{(0)}(A\times A)=0\ ,\] 
    which is one of the open cases of Beauville's conjectures.
   \end{remark}

\section{Some cubic sixfolds}

In this sction, we consider Voevodsky's conjecture (conjecture \ref{voe}) for Fano varieties of some special cubic sixfolds (known to have finite--dimensional motive).

\begin{proposition}\label{propvoe6} Let $X$ be a cubic sixfold defined by an equation
   \[   f_1(x_0,\ldots,x_3)+ f_2(x_4,\ldots,x_7)=0\ ,\]
   where $f_1, f_2$ define smooth cubic surfaces.
  Let $F$ be the Fano variety of lines on $X$ (so $F$ is smooth of dimension $8$). Then
    \[ A^j_{num}(F)= A^j_{\otimes}(F)\ \ \ \hbox{for\ all\ }j\not=4\ .\]
  \end{proposition}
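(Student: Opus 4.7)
The plan is to follow the strategy of proposition \ref{propvoe4}. Specifically, I would exhibit $h(F)$ as a direct summand, in $\MM_{\rm rat}$, of a motive built from Lefschetz motives, Chow--K\"unneth pieces of powers of an elliptic curve $E$, and an oddly finite--dimensional motive. Voevodsky's conjecture would then follow piece by piece, the only obstruction arising in the middle Chow group of $E^4$, which corresponds precisely to $j=4$.

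The first step is to note that $X$ has finite--dimensional motive --- this follows from \cite{moi}, using that the ``sum of two cubic surfaces'' structure of $X$ gives a motivic relation between $X$ and the cubic surfaces $Y_i = \{f_i=0\}$ (which are rational, hence have Lefschetz motive) together with the Fermat elliptic curve. By corollary \ref{corfin}, $F$ then also has finite--dimensional motive. The crucial step is to establish a motivic decomposition of $h(X)$. Let $E$ be the Fermat elliptic curve $\{u^3+v^3+w^3=0\}\subset\PP^2(\C)$. The Shioda--type inductive structure for $X$ (arising from the natural $\mu_3$--cover of $\PP^3\times\PP^3$ branched along $Y_1\times\PP^3\cup\PP^3\times Y_2$) provides a correspondence between $h(X)$ and $h(Y_1)\otimes h(Y_2)\otimes h(E\times E)$, admitting a left--inverse in $\MM_{\rm hom}$. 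Finite--dimensionality would allow this inclusion to be lifted to $\MM_{\rm rat}$ via lemma \ref{ok}. Since $h(Y_i)\cong\QQ\oplus\LL^{\oplus 7}\oplus\LL^2$ is a direct sum of Tate motives (every smooth cubic surface being a blow--up of $\PP^2$ at six points), collecting terms yields
\[ h(X)\ \subset\ \bigoplus h^2(E\times E)(m_j) \oplus \bigoplus \LL(n_j) \oplus M_{odd}^X \ \ \ \hbox{in}\ \MM_{\rm rat}\ ,\]
where $M_{odd}^X$ is oddly finite--dimensional (gathering the summands containing $h^1(E\times E)$ or $h^3(E\times E)$).

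Combining with theorem \ref{main} and the standard description of $X^{[2]}$ via the blow--up of $X\times X$ along the diagonal would yield
\[ h(F)\ \subset\ h^4(E^4)(2) \oplus \bigoplus h^2(E\times E)(m_j) \oplus \bigoplus \LL(p_j) \oplus M_{odd}\ \ \ \hbox{in}\ \MM_{\rm rat}\ ,\]
with $M_{odd}$ oddly finite--dimensional. Passing to Chow groups would give a split injection (induced by a correspondence)
\[ \Gamma_\ast\colon\ \ A^j_{num}(F)\ \to\ (\Pi_4^{E^4})_\ast A^{j-2}_{num}(E^4) \oplus \bigoplus A^{j-m_j}_{num}(E\times E) \oplus A^\ast(M_{odd})\ ,\]
whose left--inverse is again induced by a correspondence. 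The term $A^\ast(M_{odd})$ is smash--nilpotent by lemma \ref{odd}; the terms $A^\ast_{num}(E\times E)$ are Chow groups on a surface, where conjecture \ref{voe} holds in full. For the remaining term $(\Pi_4^{E^4})_\ast A^{j-2}_{num}(E^4)\subset A^{j-2}_{num}(E^4)$, Voevodsky's conjecture on the abelian fourfold $E^4$ is known for divisors, zero--cycles, and $1$--cycles (this last by proposition \ref{seb}), that is, for $j-2\in\{0,1,3,4\}$. Only the middle case $j-2=2$, i.e.\ $j=4$, remains open. For any $j\not=4$, the left--inverse would then carry smash--nilpotence back to $F$, giving $A^j_{num}(F)\subset A^j_\otimes(F)$.

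The hard part is the construction, in $\MM_{\rm rat}$, of the split injection $h(X)\hookrightarrow h(Y_1)\otimes h(Y_2)\otimes h(E\times E)$ appearing in the second paragraph. Hodge--theoretically the target is natural: the primitive middle cohomology $H^6_{\rm prim}(X)$, of Hodge type $(4,2)+(3,3)+(2,4)$, embeds in $H^\ast(Y_1\times Y_2\times E\times E)$ precisely because $H^2(E\times E)$ supplies the needed $(2,0)$ and $(0,2)$ classes. The upgrade from $\MM_{\rm hom}$ to $\MM_{\rm rat}$ requires finite--dimensionality (lemma \ref{ok}) together with a careful analysis of the $\mu_3$--action on the Shioda cover. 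An alternative, in the spirit of proposition \ref{ab}, would be to construct a Kuga--Satake--type correspondence reducing the weight--$2$ Hodge structure $H^6_{\rm prim}(X)(2)$ directly to $H^2$ of an abelian surface.
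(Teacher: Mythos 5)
There is a genuine gap, and it sits exactly where you place it yourself: the split injection of $h(X)$ into a motive built from $h(Y_1)\otimes h(Y_2)\otimes h(E\times E)$ is asserted, not proved. The ``$\mu_3$--cover of $\PP^3\times\PP^3$ branched along $Y_1\times\PP^3\cup\PP^3\times Y_2$'' is not the Katsura--Shioda inductive structure for $X$, and it is not clear that any such correspondence exists; in particular the appearance of the Fermat elliptic curve $E$ for \emph{arbitrary} smooth $f_1,f_2$ is unjustified (nothing in the equation $f_1+f_2=0$ singles out $E$, and a Hodge--theoretic type count is far from producing an algebraic correspondence, let alone one that lifts to $\MM_{\rm rat}$). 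Since everything downstream --- the inclusion $h(F)\subset h^4(E^4)(2)\oplus\cdots$, the reduction of $A^{j-2}_{num}(E^4)$ to Sebastian's results on $0$-- and $1$--cycles --- depends on this unproved decomposition, the argument as written does not close. Your claim that finite--dimensionality of $h(X)$ ``follows from the motivic relation with the $Y_i$ and $E$'' suffers from the same circularity.

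For comparison, the paper's route avoids $E$ entirely. The genuine Katsura--Shioda structure gives a degree--$3$ rational map $X_1\times X_2\dashrightarrow X$ from a product of cubic \emph{threefolds} $X_i=\{f_i+z^3=0\}$, resolved by blowing up $Y_1\times Y_2$; since $H^3(X_i)=N^1H^3(X_i)$, the result of Achter--Casalaina-Martin--Vial replaces $h^3(X_i)$ by $h^1(A_i)(1)$ with $A_i$ the $5$--dimensional intermediate Jacobian, and finite--dimensionality (lemma \ref{split}/\ref{ok}) lifts this to $\MM_{\rm rat}$. This yields $h(F)\subset h^4(A\times A)(2)\oplus\cdots$ with $A=A_1\times A_2$ of dimension $10$; because $A\times A$ is then $20$--dimensional, the cycles in question are no longer $0$-- or $1$--cycles, and the paper must invoke Beauville's decomposition $A^i_{(j)}$ together with K\"unnemann's hard Lefschetz to push them into $A^{2g}_{(4)}$ and $A^{2g-1}_{(2)}$ before applying Sebastian. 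Your wrap--up on the small variety $E^4$ would indeed be cleaner and would bypass the Beauville--K\"unnemann step --- but only if the reduction to $E^4$ were actually established, which it is not.
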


\begin{proof} The argument is similar to proposition \ref{propvoe4}. Again, we first prove a ``Kuga--Satake type'' statement that may be of independent interest:

  \begin{proposition}\label{ab6} Let $F$ be as in proposition \ref{propvoe6}. There exists an abelian variety $A$ (of dimension $10$) such that
     \[ h(F)\ \subset\ h^4(A\times A)(2)\oplus \bigoplus_j h^2(A)(m_j)\oplus \bigoplus_j \LL(n_j)\ \ \ \hbox{in}\ \MM_{\rm rat} \]
     (i.e., $h(F)$ is isomorphic to a direct summand of the right--hand side).
     \end{proposition}

\begin{proof} This is similar to proposition \ref{ab}. It follows from the Katsura--Shioda argument \cite[Remark 1.10]{KatS} that there is a rational map
  \[ \phi\colon\ \ X_1\times X_2\ \dashrightarrow\ X\ ,\]
  where $X_1$ and $X_2$ are smooth cubic threefolds defined by
  \[  f_1(x_0,\ldots,x_3)+z^3=0\ ,\]
  resp.
  \[ f_2(x_4,\ldots,x_7)+w^3=0\ .\]
  Moreover, the indeterminacy of the rational map $\phi$ is resolved by the blow--up $\wt{X_1\times X_2}$ with centre $Y_1\times Y_2\subset X_1\times X_2$, where $Y_1, Y_2$ are smooth cubic surfaces.
  Since $H^3(X_i)=N^1 H^3(X_i)$, we have (just as in the proof of proposition \ref{ab}) a split injection 
    \[ h(X_i)\ \to\ h^1(A_i)(1)\oplus \bigoplus_j \LL(n_j)\ \ \ \hbox{in}\ \MM_{\rm hom}\ \ \ (j=1,2)\ ,\]
    where $A_i$ is an abelian variety with $\dim A_i=h^{2,1}(X_i)=5$.
    Likewise, since $H^2(Y_i)=N^1 H^2(Y_i)$, we have
    \[ h(Y_i)= \bigoplus_j \LL(n_j)\ \ \ \hbox{in}\ \MM_{\rm hom}\ \ \ (j=1,2)\ .\]    
  
  The K\"unneth components of the diagonal of $X$ are algebraic (as $X$ is a hypersurface). The only interesting cohomology of $X$ is $H^6(X)$. From the remarks above, we obtain
  a homomorphism
    \[ \begin{split} \Gamma_6\colon\ \ h^6(X)\ \to\ h^6(\wt{X_1\times X_2})&= h^3(X_1)\otimes h^3(X_2)\oplus \bigoplus_j \LL(n_j)\\
           &\ \to\ h^2(A_1\times A_2)(2)\oplus \bigoplus_j \LL(n_j)\ \ \ \hbox{in}\ \MM_{\rm hom} \ .\\
           \end{split}\]
    Since both $X$ and the $A_i$ verify the Lefschetz standard conjecture, and $\Gamma_6$ induces an injection on cohomology, $\Gamma_6$ has a left--inverse (by semi--simplicity of $\MM_{\rm num}$, as in the proof of proposition \ref{ab}). It follows there is a homomorphism
    \[ \Gamma\colon\ \ h(X)\ \to\ h^2(A_1\times A_2)(2)\oplus \bigoplus_j \LL(n_j)\ \ \ \hbox{in}\ \MM_{\rm hom} \]
    admitting a left--inverse. Using finite--dimensionality of the motive of $X$ (and lemma \ref{split}), we find the homomorphism
    \[ \Gamma\colon\ \ h(X)\ \to\ h^2(A_1\times A_2)(2)\oplus \bigoplus_j \LL(n_j)\ \ \ \hbox{in}\ \MM_{\rm rat} \]
    also admits a left--inverse 
   
   We have seen there is a homomorphism
   \[ h(F)\ \to\ h(X\times X)(-2)\oplus \bigoplus_{i=1}^{n-1}  h(X)(i-2)\ \ \ \hbox{in}\ \MM_{\rm rat}\ \]
  admitting a left--inverse. Combining with the split homomorphism $\Gamma$, this proves proposition \ref{ab6}. 
\end{proof}

We proceed to wrap up the proof of proposition \ref{propvoe6}. It follows from proposition \ref{ab6} (after taking Chow groups) there is a correspondence $\Gamma$ inducing a split injection
    \[ \Gamma_\ast\colon\ \ A^j_{num}(F)\ \to\ (\Pi_4^{A\times A})_\ast A^{j-2}_{num}(A\times A)\oplus \bigoplus_j (\Pi_2^{A})_\ast A^{n_j}_{num}(A)\ ,\] 
   and the left inverse to $\Gamma_\ast$ is again induced by a correspondence. 
    This implies there is a split injection
    \[ \Gamma_\ast\colon\ \ A^j_{num}(F)\ \to\  A^{j-2}_{(2j-8)}(A\times A)\oplus \bigoplus_j  A^{2}_{(2)}(A)\ ,\] 
    where $A^i_{(j)}$ denotes the Beauville filtration \cite{Beau}. (Indeed, one has $(\Pi_2^{A})_\ast A^{1}_{num}(A)=0$ and     $(\Pi_2^{A})_\ast A^{i}_{}(A)=A^i_{(2i-2)}(A)=0$ for $i\ge 3$ \cite{Beau}.)    
    
    The summand $A^{2}_{(2)}(A)$ consists of smash--nilpotent cycles: indeed, thanks to K\"unnemann's hard Lefschetz theorem \cite{Kun}, there is an isomorphism
    \[ A^2_{(2)}(A)\ \xrightarrow{\cong}\ A^g_{(2)}(A)\]
    (where $g:=\dim A$),
    which is induced by a correspondence and such that the inverse is again induced by a correspondence. It remains to consider the first summand. It follows from general properties of Beauville's filtration \cite{Beau} that
    \[ A^{j-2}_{(2j-8)}(A\times A) =0\ \ \ \hbox{for\ }j\le 3\ \hbox{and\ for\ }j\ge 7\ .\]
    In the remaining cases ($j=5$ or $6$), we have
    \[      A^{j-2}_{(2j-8)}(A\times A)=\begin{cases} A^3_{(2)}(A\times A) &  \hbox{if\ }j=5\ ;\\
                                         A^4_{(4)}(A\times A) & \hbox{if\ } j=6\ .\\
                                         \end{cases} \]
    In both cases, it follows that 
      \[  A^{j-2}_{(2j-8)}(A\times A)\ \subset\ A^{j-2}_\otimes(A\times A)\ .\]
      Indeed, in the first case ($j=5$), there is a correspondence--induced isomorphism
         \[    A^3_{(2)}(A\times A)\oplus \bigoplus_j  A^{2}_{(2)}(A)\ \xrightarrow{\cong}\  A^{2g-1}_{(2)}(A\times A)  \ ,\]   
  ( where $g:=\dim A$), thanks to K\"unnemann's hard Lefschetz theorem \cite{Kun}. As the inverse to this isomorphism is also correspondence--induced, this isomorphism preserves
  smash--nilpotence. But the right--hand side consists of smash--nilpotent cycles, by Sebastian's result \cite{Seb2}.
  In the second case (i.e., $j=6$), we have a (correspondence--induced) isomorphism (with correspondence--induced inverse)
    \[ A^4_{(4)}(A\times A)\ \xrightarrow{\cong}\  A^{2g}_{(4)}(A\times A)\ .\]
    But the right--hand side obviously consists of smash--nilpotent cycles, and so we are done.
   \end{proof}

\begin{remark} The argument of proposition \ref{propvoe6} runs into problems for $j=4$. This is because for $j=4$, one would need to know that 
  \[A^2_{num}(A\times A)\cap A^2_{(0)}(A\times A)=0\ ,\] 
  which is one of the unsolved cases of Beauville's conjectures.
\end{remark}

\section{Arbitrary dimension}

In this section, we consider injectivity of certain cycle class maps for Fano varieties of lines on cubics of arbitrary dimension. In the special case of Fermat cubics, we obtain an optimal statement (proposition \ref{fermat}). For general cubics of arbitrary dimension, we obtain a weaker statement (proposition \ref{propDM}).

\begin{proposition}\label{fermat} Let $X\subset\PP^{n+1}(\C)$ be the Fermat cubic 
   \[ x_0^3+x_1^3+\cdots +x_{n+1}^3 =0\ .\]
   Let $F=F(X)$ be the Fano variety of lines on $X$ (so $F$ is smooth of dimension $2n-4$).
   Then
   \[ A^j_{hom}(F)=0\ \ \ \hbox{for\ all\ }j\ge {5n-2\over 3}\ .\]
   \end{proposition}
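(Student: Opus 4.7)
The plan is to combine the motivic identity from Theorem \ref{main} with the Shioda--Katsura dominance of the Fermat cubic by a product of Fermat elliptic curves, reducing the injectivity of the cycle class map on $F$ to high-codimension Chow-group vanishings on an abelian variety.

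First, Theorem \ref{main} together with the standard blow-up formula (for the Hilbert scheme $X^{[2]}$, dominated by the blow-up of $X\times X$ along the diagonal) yields that $h(F)(2)$ is a direct summand of $h(X\times X)\oplus\bigoplus_{i=1}^{n-1}h(X)(i)$ in $\MM_{\rm rat}$. Passing to Chow groups gives a split injection
\[ A^j_{hom}(F)\ \hookrightarrow\ A^{j+2}_{hom}(X\times X)\oplus\bigoplus_{i=1}^{n-1}A^{j+2-i}_{hom}(X). \]
For $j\geq (5n-2)/3$, the codimension indices of the $X$-summands satisfy $j+2-i\in [(2n+7)/3,\,j+1]$, while the $X\times X$-index satisfies $j+2\geq (5n+4)/3$. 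It therefore suffices to prove $A^k_{hom}(X)=0$ for $k\geq (2n+7)/3$ and $A^\ell_{hom}(X\times X)=0$ for $\ell\geq (5n+4)/3$.

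Second, I would use the Shioda--Katsura inductive construction: the Fermat cubic $X=X^n$ is rationally dominated by a product $E^m$ of Fermat elliptic curves $E$, so $h(X^n)$ is a direct summand in $\MM_{\rm rat}$ of $h(E^m)$ up to Tate twists. Combined with the standard splitting $h(X)=\bigoplus_{i=0}^n \mathbb{L}^i \oplus h^n_{prim}(X)$ (the Tate summands being $A^\ast_{hom}$-trivial), this reduces the vanishing for $X$ to that on the motive $h^n_{prim}(X)$. By Shioda's eigenspace decomposition of $H^n_{prim}(X^n)$ under the $(\mu_3)^{n+2}/\mu_3$-action, each eigenspace has Hodge type $\bigl((n-1+k)/3,\,(2n+1-k)/3\bigr)$ for some integer $k\equiv n+1\pmod 3$, giving a Griffiths coniveau bound $\lceil (n-1)/3\rceil$. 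Consequently, $h^n_{prim}(X)$ is a direct summand of a sum of motives of the form $h^{?}(E^{?})(c_\alpha)$ with Tate twists $c_\alpha\geq \lceil (n-1)/3\rceil$.

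Third, for each of these abelian-variety summands, the Chow group $A^k_{hom}$ can be bounded explicitly via Beauville's decomposition together with K\"unnemann's hard Lefschetz isomorphism: $A^k\bigl(h^{?}(E^m)(c)\bigr)$ embeds into a specific Beauville piece of $A^{k-c}(E^m)$, which vanishes once $k-c$ exceeds the appropriate dimensional threshold. The choice $k\geq (2n+7)/3$ is calibrated precisely to force this vanishing. A parallel computation, using K\"unneth to decompose $h(X\times X)$ into tensor products of $h^n_{prim}(X)$ with itself and with Tate motives (and applying the same Beauville--K\"unnemann argument to each tensor summand), yields $A^\ell_{hom}(X\times X)=0$ for $\ell\geq (5n+4)/3$.

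The main obstacle is the quantitative bookkeeping in the third step: one must match the Griffiths coniveau $\lceil (n-1)/3\rceil$ of $H^n_{prim}(X)$, the Tate twists inherited from the Shioda--Katsura decomposition, the doubling effect under $X\mapsto X\times X$, and the additional twists $\mathbb{L}^{i-2}$ from the blow-up decomposition of $h(X^{[2]})$, so that all three vanishings align at the sharp threshold $j\geq (5n-2)/3$ without any off-by-one loss.
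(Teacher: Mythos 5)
Your first step coincides exactly with the paper's: Theorem \ref{main} plus the blow--up formula for $X^{[2]}$ gives the split injection $A^j_{hom}(F)\hookrightarrow A^{j+2}_{hom}(X\times X)\oplus\bigoplus_i A^{j+2-i}_{hom}(X)$, and your bookkeeping of the resulting thresholds ($k\ge(2n+7)/3$ for $X$, $\ell\ge(5n+4)/3$ for $X\times X$) is correct and matches the paper's. Where you diverge is in how these two vanishings are obtained. The paper simply quotes Voisin's theorem on Chow groups of Fermat cubics (\cite{V96}, Theorem \ref{chowfermat}: $A^k_{hom}(X)=0$ for $k\ge(2n+4)/3$, proved by geometric methods having nothing to do with finite--dimensionality), and deduces the product statement by the Bloch--Srinivas observation of Corollary \ref{prod} --- which, incidentally, is much lighter than your proposed K\"unneth--plus--Beauville analysis of $h(X\times X)$.

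Your replacement of Voisin's theorem by the Shioda--Katsura/Beauville/K\"unnemann route has a genuine gap at the word ``Consequently''. The Shioda--Katsura induction expresses $h^n_{prim}(X)$ through summands of the shape $\bigl[h^1(E)^{\otimes w}\bigr]^{inv}(-c)$ with $w+2c=n$ and $c$ ranging over \emph{all} values $1,\dots,\lfloor n/2\rfloor$; the Griffiths/Shioda Hodge--type computation tells you that the \emph{realizations} of the low--twist summands are concentrated in middle Hodge types, but it does not tell you that those summands are isomorphic, \emph{as Chow motives}, to Tate twists by $\ge\lceil(n-1)/3\rceil$ of effective motives. That promotion from Hodge coniveau to motivic coniveau is a generalized--Hodge/Bloch--Beilinson--type statement; making it rigorous would require exhibiting algebraic correspondences (e.g.\ from the CM structure of the Fermat elliptic curve, or via Vial's niveau machinery) that rewrite each eigen--summand $[h^1(E)^{\otimes w}]^{inv}$ of inner Hodge type as $N(c')$ with $N$ of lower weight, and you have not done this. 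Without it, the only unconditional input is Beauville's $A^p_{(s)}(A)=0$ for $s>p$, which applied to the worst summand ($c=1$, $w=n-2$) yields $A^k_{hom}(X)=0$ only for $k>n-1$ --- far short of the needed $(2n+7)/3$ once $n>7$. So as written the argument does not close; either supply the motivic coniveau statement for powers of the Fermat cubic curve, or do as the paper does and invoke Voisin's theorem directly.
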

   
   \begin{proof} As before, it follows from \cite{moi} that for any $j$ there is a split injection
   \begin{equation}\label{inj}  \begin{split} A^j_{hom}(F)\ \to\ A^{j+2}_{hom}(X^{[2]})\ &\to\ A^{j+2}_{hom}(X\times X)\oplus \bigoplus_{i=1}^{n-1} A^{j+2-i}_{hom}(X)\\
                                                                          &=\ A^{j+2}_{hom}(X\times X)\oplus \bigoplus_{i=-1}^{n-3} A^{j-i}_{hom}(X)\ .\\
                                                                          \end{split}\end{equation}
                                                                          
                                        We now use the following result concerning Chow groups of Fermat hypersurfaces:
                                        
                        \begin{theorem}[Voisin \cite{V96}]\label{chowfermat} Let $X\subset\PP^{n+1}(\C)$ be the Fermat cubic. Then
                        \[   A^j_{hom}(X)=0\ \ \ \hbox{for\ all\ } j\ge {2n+4\over 3}\ .\]
                        \end{theorem}
                        
                      \begin{proof} This follows from a more general result \cite{V96}, or alternatively \cite[ Theorem 4.4]{Vo}. (Voisin's result actually concerns a more general family of cubics.)
                      \end{proof}

                 \begin{corollary}\label{prod} Let $X\subset\PP^{n+1}(\C)$ be the Fermat cubic. Then
                 \[ A^j_{hom}(X\times X)=0\ \ \ \hbox{for\ all\ }j\ge {5n+4\over 3}\ .\]
                 \end{corollary}
                 
   \begin{proof} This follows from theorem \ref{chowfermat}, combined with the following observation: if $X$ and $X^\prime$ are two smooth projective varieties and
     \[ A_i^{hom}(X)=A_i^{hom}(X^\prime)=0\ \ \ \hbox{for\ all\ } i\le r\ ,\]
     then also
     \[ A_i^{hom}(X\times X^\prime)=0\ \ \ \hbox{for\ all\ } i\le r\ .\]
     The observation can be proven using the Bloch--Srinivas argument \cite{BS}, cf. for instance \cite[Remark 1.8.2]{small}.
     \end{proof}              
   
   Using proposition \ref{chowfermat} and corollary \ref{prod}, we verify that the right--hand side of the map (\ref{inj}) is $0$ as soon as $j\ge (5n-2)/3$. Since the map (\ref{inj}) is injective, we are done.
   \end{proof}

\begin{remark} Conjecturally, theorem \ref{chowfermat} is true for {\em all\/} smooth cubic hypersurfaces; this would follow from the Bloch--Beilinson conjectures.
\end{remark}

The following conjecture is a special case of a more general conjecture (the conjecture as stated in \cite[Conjecture 6.4]{DMan} concerns Fano varieties of linear subspaces (not only lines) contained in arbitrary complete intersections (not only cubics)):

\begin{conjecture}[Debarre--Manivel \cite{DMan}]\label{DMconj} Let $X\subset\PP^{n+1}(\C)$ be a smooth cubic, and let $F=F(X)$ be the Fano variety of lines on $X$. Assume $\ell\in\NN$ is such that
  $ {(\ell+4)(\ell+3)/ 2}\le n+1$.
Then the inclusion of $F$ into the Grassmannian $G:=G(1,\PP^{n+1})$ induces an isomorphism
  \[ A_\ell(F)\ \cong\ A_\ell(G)\ ; \]
  in particular $A_\ell^{hom}(F)=0$.
  \end{conjecture}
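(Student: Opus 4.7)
The plan is to combine the motivic decomposition of Theorem \ref{main} with low-dimensional vanishing results for Chow groups of cubic hypersurfaces, together with a geometric surjectivity argument in the spirit of Debarre--Manivel. Via Theorem \ref{main} and the standard resolution of $X^{[2]}$ by the blow-up of $X\times X$ along the diagonal, one obtains (as in the proof of Proposition \ref{fermat}) a split injection
\[ A_\ell^{hom}(F) \;\hookrightarrow\; A_{\ell-2}^{hom}(X \times X) \;\oplus\; \bigoplus_{i=1}^{n-1} A_{\ell+2-i}^{hom}(X)\ ,\]
and the problem splits into two statements: surjectivity of the restriction map $A_\ell(G) \to A_\ell(F)$, and vanishing of every term on the right-hand side in the range $(\ell+4)(\ell+3)/2 \le n+1$.

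For surjectivity I would adopt the Debarre--Manivel ``rolling'' strategy: given any $\ell$-cycle on $F$, one deforms it via a moving flag of linear subspaces of $\PP^{n+1}$ into a family of cycles supported on smaller linear sections $X \cap \Lambda$, so that after finitely many steps the cycle becomes a combination of Schubert classes restricted from $G$. The numerical bound $(\ell+4)(\ell+3)/2 \le n+1$ is precisely what guarantees that each Fano variety $F(X \cap \Lambda)$ encountered during this process has the expected dimension and admits enough deformations for the induction to close. This step is purely geometric and does not involve the motivic apparatus of this paper.

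For the vanishing of $A_j^{hom}(X)$ in the relevant range, observe that the DM numerical bound forces $\ell = O(\sqrt{n})$, so one needs $A_j^{hom}(X) = 0$ only for $j \le O(\sqrt{n})$---far below the conjectural Bloch--Beilinson middle-dimensional threshold for a cubic of dimension $n$. I would attempt this vanishing by exploiting the recursive structure of lines on $X$: the lines through a general point $x \in X$ form a cubic hypersurface $C_x \subset \PP^{n-1}$ of dimension $n-3$, and iterating this construction produces a tower of rational families that should suffice to identify low-dimensional classes on $X$ with their Schubert analogues. The vanishing for $X\times X$ would then follow by a Bloch--Srinivas argument exactly as in the proof of Corollary \ref{prod}.

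The hard part is precisely this last vanishing $A_j^{hom}(X) = 0$ for $j \le O(\sqrt{n})$ on every smooth cubic $X$. Although the range is small, the standard techniques handle only $j=0$ and, with effort, $j=1$ without substantial new input. Closing the gap between what is currently known---the Fermat case (Theorem \ref{chowfermat}) and the narrow range $\ell \le 2$ treated in Proposition \ref{propDM}---and the full DM bound appears to require either a Voisin-style degeneration linking arbitrary smooth cubics to the Fermat cubic (which is delicate, because the Bloch--Srinivas mechanism naturally flows in the specialization direction on Chow groups), or a direct construction of enough rational subvarieties of $X$ to sweep out every low-dimensional Chow class. This is the principal obstacle preventing a proof of Conjecture \ref{DMconj} in full generality by the methods developed in the present note.
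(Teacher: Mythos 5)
This statement is Conjecture \ref{DMconj}: it is an \emph{open conjecture} of Debarre and Manivel which the paper records without proof. There is therefore no ``paper's own proof'' to compare against. What the paper does prove are fragments: the ``in particular'' clause $A_\ell^{hom}(F)=0$ for $\ell\le 2$ when $n\ge 14$ and for $\ell\le 3$ when $n\ge 18$ (Proposition \ref{propDM}), and a statement for Fermat cubics in a different range (Proposition \ref{fermat}). Your proposal, which ends by candidly identifying an unclosed gap, is likewise not a proof, and your honesty about that is the correct conclusion to reach.

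Two substantive remarks on the sketch itself. First, an index slip: the split injection coming from Theorem \ref{main} lands in $A_{\ell+2}^{hom}(X\times X)$, not $A_{\ell-2}^{hom}(X\times X)$ (the codimension increases by $2$ while the ambient dimension increases by $4$, so the dimension index goes \emph{up} by $2$); this matters because the vanishing you then need on $X\times X$ is in dimension $\ell+2$, which is harder than in dimension $\ell-2$. Second, and more importantly, you locate the obstruction in the wrong place. The vanishing $A_j^{hom}(X)=0$ for $j=O(\sqrt{n})$ on an arbitrary smooth cubic is not the bottleneck: Otwinowska's theorem \cite[Corollary 1]{Otw}, which is exactly what Proposition \ref{propDM} invokes, gives $A_j^{hom}(X)=0$ in a range of $j$ growing \emph{linearly} in $n$ (the paper uses $j\le 4$ for $n\ge 14$ and $j\le 5$ for $n\ge 18$), whereas the Debarre--Manivel hypothesis $(\ell+4)(\ell+3)/2\le n+1$ forces $n$ to grow quadratically in $\ell$. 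Feeding Otwinowska's full range into the split injection and the Bloch--Srinivas product argument of Corollary \ref{prod}, the homological-triviality clause $A_\ell^{hom}(F)=0$ is much closer to known results throughout the conjectured range than your ``only $j=0$ and, with effort, $j=1$'' suggests. What genuinely remains out of reach of the motivic machinery is the actual isomorphism $A_\ell(F)\cong A_\ell(G)$, i.e.\ that $A_\ell(F)$ is generated by restricted Schubert classes: nothing in Theorem \ref{main} or its corollaries controls the image of $A_\ell(G)\to A_\ell(F)$ or identifies the algebraic part of $H_{2\ell}(F)$, and your ``rolling'' paragraph gestures at this step without supplying an argument. That, rather than the vanishing, is where the conjecture is stuck.
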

  
Conjecture \ref{DMconj} is true for $\ell\le 1$ \cite[Section 6]{DMan}. We provide some partial answer for higher $\ell$:

\begin{proposition}\label{propDM} Let $X\subset\PP^{n+1}(\C)$ be a smooth cubic, and let $F=F(X)$ be the Fano variety of lines on $X$.

\noindent
(\rom1) 
  \[ A_2^{hom}(F)=0\ \ \ \hbox{for\ all\ }n\ge 14\ .\]
  
 \noindent
 (\rom2)
 \[ A_3^{hom}(F)=0\ \ \ \hbox{for\ all\ }n\ge 18\ .\]
 \end{proposition}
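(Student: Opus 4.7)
\medskip

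\noindent\textbf{Proof plan for Proposition \ref{propDM}.}
The strategy is to run the argument of Proposition \ref{fermat} for an arbitrary smooth cubic, trading the explicit Fermat input (Theorem \ref{chowfermat}) for a vanishing theorem for low-dimensional Chow groups of cubic hypersurfaces of large enough dimension. Concretely, Theorem \ref{main} together with the blow-up description of the Hilbert scheme produces a split injection
\[ A^j_{hom}(F)\ \hookrightarrow\ A^{j+2}_{hom}(X^{[2]})\ \hookrightarrow\ A^{j+2}_{hom}(X\times X)\oplus \bigoplus_{i=1}^{n-1} A^{j+2-i}_{hom}(X), \]
valid for every $j$ and every smooth cubic $X\subset\PP^{n+1}(\C)$ of dimension $n$.

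Specializing to $j=2n-6$ (part (\rom1)) and to $j=2n-7$ (part (\rom2)), the summand $\bigoplus_{i=1}^{n-1} A^{j+2-i}_{hom}(X)$ contributes, after rewriting $A^{n-k}_{hom}(X)=A_k^{hom}(X)$ and discarding terms in codimension $>n$, precisely the groups $A_k^{hom}(X)$ for $0\le k\le 4$ (resp. $0\le k\le 5$); the $X\times X$ factor contributes $A_4^{hom}(X\times X)$ (resp. $A_5^{hom}(X\times X)$). Hence the proposition reduces to establishing
\[ A_k^{hom}(X)=0\ \text{for}\ k\le 4,\qquad A_4^{hom}(X\times X)=0, \]
when $n\ge 14$, and the analogous statements with $4$ replaced by $5$ when $n\ge 18$.

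The vanishing on the product is an immediate consequence of the vanishing on $X$, via the Bloch--Srinivas diagonal decomposition already invoked in the proof of Corollary \ref{prod}: if $A_k^{hom}(X)=0$ for all $k\le r$, then the same holds for $X\times X$ in that range. Thus the whole problem collapses to the single statement: \emph{for a smooth cubic $n$-fold $X$, one has $A_k^{hom}(X)_{\QQ}=0$ for $k\le (n+2)/4$} (or whatever sharp form is needed to yield the thresholds $f(4)=14,\ f(5)=18$). Such a vanishing can be extracted from standard Bloch--Srinivas/diagonal-decomposition technology applied to $X$, using that in high enough dimension $X$ is swept out by low-dimensional linear subspaces contained in $X$ (the relevant Debarre--Manivel non-emptiness/covering criterion for $F_k(X)$ will enter here). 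Alternatively, one may invoke the effective vanishing results of Paranjape or Esnault--Levine--Viehweg for smooth hypersurfaces of small degree.

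The main obstacle is thus purely dimension-counting: one has to identify an input vanishing theorem for $A_k^{hom}(X)$ on smooth cubic $n$-folds whose threshold in $n$, as a function of $k$, matches $n\ge 14$ at $k=4$ and $n\ge 18$ at $k=5$. Everything else in the proof is the formal motivic machinery already developed in the paper (Theorem \ref{main}, the blow-up decomposition, and the Bloch--Srinivas extension to products). Once the sharp Chow-group vanishing on $X$ is in hand, parts (\rom1) and (\rom2) follow by plugging the respective values of $j$ into the split injection above.
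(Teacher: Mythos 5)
Your plan is structurally identical to the paper's proof: the same split injection coming from Theorem \ref{main} and the blow-up description of $X^{[2]}$, the same specialization to $j=\dim F-2$ and $j=\dim F-3$, and the same Bloch--Srinivas step to pass from vanishing on $X$ to vanishing on $X\times X$ (exactly as in Corollary \ref{prod}). That reduction is correct.

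The genuine gap is that you never supply the one nontrivial input, namely a vanishing theorem for $A_\ell^{hom}(X)$ of a smooth cubic $n$-fold with thresholds matching $\ell\le 4$ for $n\ge 14$ and $\ell\le 5$ for $n\ge 18$, and the substitutes you gesture at would not deliver these numbers. The Esnault--Levine--Viehweg bound \cite{ELV} for degree-$3$ hypersurfaces requires the ambient dimension to grow like $\sum_{i=1}^{\ell+1}\binom{i+2}{i}$, which for $\ell=4$ already forces the ambient projective space to have dimension on the order of $55$, nowhere near $15$; and a generic ``swept out by linear subspaces'' Bloch--Srinivas argument likewise does not reach these thresholds for cubics. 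The paper closes this step by citing Otwinowska \cite[Corollary 1]{Otw}, whose sharpened bound for hypersurfaces of small degree gives precisely $A_\ell^{hom}(X)=0$ for $\ell\le 4$ when $n\ge 14$ and for $\ell\le 5$ when $n\ge 18$ (consistent with the form $n\ge 4\ell-2$ you guessed). Without that specific reference, or an equivalent proof of it, your argument does not yield the stated bounds $n\ge 14$ and $n\ge 18$; everything else in your write-up is the formal machinery the paper also uses. (A minor bookkeeping point: after discarding the summands of codimension larger than $n$, the $X$-factors that survive are $A_k^{hom}(X)$ for $0\le k\le 3$, not $0\le k\le 4$; this is harmless since the needed vanishing covers both ranges.)
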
 

\begin{proof}

\noindent
(\rom1) Suppose $n\ge 14$. It follows from \cite[Corollary 1]{Otw} that 
   \[ A_\ell^{hom}(X)=0\ \ \ \hbox{for\ all\ }\ell\le 4\ .\]
   Using the argument of corollary \ref{prod}, we find that
   \[  A_\ell^{hom}(X\times X)=0\ \ \ \hbox{for\ all\ }\ell\le 4\ .\]
  It follows from theorem \ref{main} that there are split injections
   \[ A^j_{hom}(F)\ \to\ A^{j+2}_{hom}(X^{[2]})\ \to\  A^{j+2}_{hom}(X\times X)\oplus \bigoplus_{i=-1}^{n-3} A^{j-i}_{hom}(X)\ .\]
   From what we have just said, we know that the right--hand side is zero as soon as $j+2\ge 2n-4$, i.e. $j\ge \dim F-2$.
   
  \noindent
  (\rom2) Suppose $n\ge 18$. Then we have that
   \[ A_\ell^{hom}(X)=0\ \ \ \hbox{for\ all\ }\ell\le 5\ \] 
   \cite[Corollary 1]{Otw}. Again, this implies that also
        \[  A_\ell^{hom}(X\times X)=0\ \ \ \hbox{for\ all\ }\ell\le 5\ .\]
        There is a split injection
\[ A^j_{hom}(F)\ \to\  A^{j+2}_{hom}(X\times X)\oplus \bigoplus_{i=-1}^{n-3} A^{j-i}_{hom}(X)\ .\]
The right--hand side is zero as soon as $j+2\ge 2n-5$, i.e. $j\ge \dim F-3$.
\end{proof}

\vskip1cm
\begin{nonumberingt} The first seeds of this note were planted in the course of the Strasbourg 2014---2015 groupe de travail based on the monograph \cite{Vo}. Thanks to all the participants of this groupe de travail for the fertile atmosphere, and for frequently watering my seeds. 
Many thanks to Yasuyo, Kai and Len for lots of pleasant coffee breaks.
\end{nonumberingt}

\vskip1cm

\end{document}